\def\ps@pprintTitle{
 \let\@oddhead\@empty
 \let\@evenhead\@empty
 \def\@oddfoot{}
 \let\@evenfoot\@oddfoot}
\DeclareMathOperator*{\argmin}{arg\,min}
\DeclareMathOperator*{\argmax}{arg\,max}
\DeclarePairedDelimiterX{\norm}[1]{\lVert}{\rVert}{#1}
\DeclarePairedDelimiterX{\inp}[2]{\langle}{\rangle}{#1, #2}
\newtheorem{theorem}{Theorem}[section]
\newtheorem{assumption}[theorem]{Assumption}
\newtheorem{definition}[theorem]{Definition}
\newtheorem{proposition}[theorem]{Proposition}
\newcommand{\indep}{\perp \!\!\! \perp}
\DeclarePairedDelimiter{\ceil}{\lceil}{\rceil}
\begin{document}

\begin{frontmatter}

\title{A Computational Framework for Solving Nonlinear Binary Optimization Problems in Robust Causal Inference}
%\tnotetext[mytitlenote]{Fully documented templates are available in the elsarticle package on \href{http://www.ctan.org/tex-archive/macros/latex/contrib/elsarticle}{CTAN}.}

%% Group authors per affiliation:
%\author{Md Saiful Islam\fnref{myfootnote}}
%\address{Radarweg 29, Amsterdam}
%\fntext[myfootnote]{Since 1880.}

%% or include affiliations in footnotes:
%\author[mymainaddress,mysecondaryaddress]{Md Saiful Islam}
%\ead[url]{www.elsevier.com}

\author[mymainaddress]{Md Saiful Islam}
\ead{islam.m@northeastern.edu}

\author[mymainaddress]{Md Sarowar Morshed}
\ead{morshed.m@northeastern.edu}

\author[mymainaddress]{Md. Noor-E-Alam\corref{mycorrespondingauthor}}
\cortext[mycorrespondingauthor]{Corresponding author}
\ead{md.alam@northeastern.edu}

\address[mymainaddress]{Department of Mechanical and Industrial Engineering, Northeastern University, Boston, MA 02115, USA}
%\address[mysecondaryaddress]{360 Park Avenue South, New York}

\begin{abstract}
Identifying cause-effect relations among variables is a key step in the decision-making process. While causal inference requires randomized experiments, researchers and policymakers are increasingly using observational studies to test causal hypotheses due to the wide availability of data and the infeasibility of experiments. The matching method is the most used technique to make causal inference from observational data. However, the pair assignment process in one-to-one matching creates uncertainty in the inference because of different choices made by the experimenter. Recently, discrete optimization models are proposed to tackle such uncertainty\textcolor{black}{;} however, they produce 0-1 nonlinear problems and lack scalability. In this work, we investigate this emerging data science problem and develop a unique computational framework to solve the robust causal inference test instances from observational data with continuous outcomes. In the proposed framework,
we first reformulate the nonlinear binary optimization problems as feasibility problems. By leveraging the structure of the feasibility formulation, we develop greedy schemes that are efficient in solving robust test problems. In many cases, the proposed algorithms achieve a globally optimal solution. We perform experiments
on real-world datasets to demonstrate the effectiveness of the proposed algorithms and compare our results with the state-of-the-art solver. Our experiments show that the proposed algorithms significantly outperform the exact method in terms of computation time while achieving the same conclusion for causal tests.
Both numerical experiments and complexity analysis demonstrate that the proposed algorithms ensure the scalability required for harnessing the power of big data in the decision-making process. Finally, the proposed framework not only facilitate\textcolor{black}{s} robust decision making through big-data causal inference, but it can also
be utilized in developing efficient algorithms for other nonlinear optimization problems such as quadratic
assignment problems. 
\end{abstract}

\begin{keyword}
Analytics \sep Discrete Optimization \sep Causal Inference \sep Big Data \sep Observational study 
%\MSC[2010] 00-01\sep  99-00
\end{keyword}

\end{frontmatter}

%\linenumbers

\section{Introduction}
In this paper, we consider an emerging data science problem in robust causal inference and develop a unique computational framework consisting of \textcolor{black}{a} novel reformulation technique and innovative algorithms to facilitate decision making from large-scale observational data. As a natural process of digitization, we are continuously generating data on our health, behavior, mood, choices, and physical activity which creates a fertile field of prospective (collecting data that is generated as a natural process over time) or retrospective (experimenting on already collected data) observational experiments. Such experiments on non-randomized data are being used in identifying cause-effect relationships among variables to make informed policy decisions in public and private sectors \citep{nikolaev2013balance}. Policy decisions across different domains are made by intervening in different socio-economic variables or process parameters (treatment), and measuring their causal effect on the desired outcome. Even though randomized experiments are the gold standard for cause-effect analysis, it is often infeasible due to legal or ethical reasons. In addition, controlled experiments can be expensive and inapplicable to events that have already occurred. For instance, we might be interested in the effect of cloudy weather on bike rentals or the effect of eating fast-food on children's learning ability. Hence, in many decision-making processes, we are constrained to use observational data only, and rapid digitization is making this more prevalent.

Identifying \textcolor{black}{a} causal effect or testing \textcolor{black}{a} causal hypothesis from observational data is prone to confounding bias because of distributional differences between treated and control samples on the measured covariates \citep{stuart2010matching}. A common strategy adopted by researchers across different domains, known as the matching method, is to adjust for observed covariates to reduce the confounding bias. The matching method aims to restore the properties of a randomized experiment by finding a control group that is identical to the treatment group in terms of the joint distribution of the measured covariates \citep{stuart2010matching,nikolaev2013balance,sauppe2014complexity,sauppe2017role}. Due to the well-established methodological framework, matching methods have been used in many disciplines\textcolor{black}{,} including public health \citep{islam2019robust}, economics \citep{dehejia1999causal}, sociology \citep{gangl2010causal}, and education \citep{zubizarreta2014matching}. However, forming matched pairs by assigning treated samples to control samples is a data mining problem \citep{morucci2018hypothesis}. The matched pair construction is done by minimizing a single criterion between the treated and control samples such as multivariate distance \citep{rubin:1979}, the probability of receiving treatment \citep{rosenbaum1983central}, or the chi-squared test statistic \citep{king:2011,nikolaev2013balance}. Even so, finding pairs that balance the empirical distributions of the study subgroups is a difficult problem. In the past, this problem was solved with network flow algorithms \textcolor{black}{that} pursued covariate balance indirectly and relied heavily on iterative post-assignment balance checking \citep{king:2011}. These methods involve a significant amount of guesswork and the experimenter has little to no control over the matching process \citep{hill2011bayesian}. In recent years, discrete optimization models have been developed \citep{nikolaev2013balance,zubizarreta2012using,zubizarreta2014matching,zubizarreta2015stable} to solve matching problems that directly aim to minimize the imbalance metrics. Use of Mixed Integer Programming (MIP) models put the experimenter in control of the matching process and provides the flexibility of including higher-order moments and multivariate moments as the imbalance measure.

Traditional matching techniques, including the recent methods using MIP models, choose a single set of matched pairs while other possible sets of matched pairs may exist with equal or similar match quality. Ignoring other equally or almost equally good sets of matched pairs creates a source of uncertainty in the inference which is dependent on the choice of the experimenter over different matching methods \citep{morucci2018hypothesis}. For instance, in an attempt to evaluate the effect of the hospital readmission reduction program (HRRP) \citep{mcilvennan2015hospital} on non-index readmission (readmission to a hospital that is different from the hospital that discharged the patient), \citet{islam2019robust} showed that with more than 15,000 matched pairs, one experimenter can find HRRP as a cause of \textcolor{black}{higher} non-index readmission while another experimenter can find the opposite. Such uncertainty becomes more prominent for observational studies involving big data, as the chance of having multiple sets of good matches increases with the increase of the size of the data. \citet{hacking:2018} refers \textcolor{black}{to} such uncertainty in statistical inference as the \textit{Hacking Interval} in the context of linear regression and popular machine learning algorithms like K-Nearest Neighbour (KNN) and Support Vector Machine (SVM). \textcolor{black}{There are a variety of sensitivity analysis techniques available in causal inference literature; however, they mostly focus on the sensitivity of experiment design (i.e., confounding effect of unmeasured variables) and assumptions. Some of those techniques identify the bound of allowable unmeasured confounding (see Ch. 4 in \citep{rosenbaum2002observational}) when others consider the heterogeneity of the effect \citep{fogarty2020studentized}, or aim to develop non-parametric techniques \citep{howard2021uniform}. Nonetheless, these sensitivity analysis methods do not consider the uncertainty in the post-study design phase due to the choices made by an experimenter.}

To test \textcolor{black}{a} causal hypothesis that is robust to the experimenter's choice of the matching algorithm, \citet{morucci2018hypothesis} proposed discrete optimization-based tests for causal hypothesis with binary and continuous outcomes. The robust causal hypothesis tests explore all possible pair assignments by computing the maximum and minimum test statistics given a good set of matches.
While the robust tests proposed by \citet{morucci2018hypothesis} address the uncertainty in inference, the integer programming models produce nonlinear-binary optimization problems that are difficult to solve even for small instances. 

In general, \textcolor{black}{the} nonlinear problem with binary variables is considered one of the challenging problems yet, such problems are abound in science and engineering applications \citep{anthony2017quadratic,murray2010algorithm}. \textcolor{black}{Specifically}, optimization problems arising in computer vision, machine learning and statistics are often nonlinear in nature when posed as MIP. On the other hand, \citet{bertsimas2016best} show that if the problem is computationally tractable, \textcolor{black}{the} MIP approach can produce significantly better solution\textcolor{black}{s} than the numerical optimization techniques commonly used in data science. Unfortunately, there are very few successful solution algorithms available in the literature and even the linearly constrained problem with quadratic objective, which is considered the simplest case among nonlinear problems, is NP-hard \citep{murray2010algorithm}. In optimization literature, there are four major approaches to solve general nonlinear integer programming problems: continuous reformulation, quadratic reformulation, linearization with piecewise linear functions, and \textcolor{black}{a} heuristic/metaheuristic approach. The continuous reformulation aims to transform the nonlinear binary problem into an equivalent problem of finding global optimum in continuous variables through the use of algebraic, geometric and analytic techniques \citep{murray2010algorithm} or relaxing the binary constraint and adding a penalty in the objective \citep{lucidi2010exact}. However, such reformulation techniques often introduce exponentially large number of variables to the model and can make it even more difficult to solve \citep{murray2010algorithm}.

Similar to the continuous reformulation, quadratization adds a large number of variables in constructing an equivalent problem. For instance, quadratic reformulation of a nonlinear problem with $n$ binary variables will need at least $2^{n/2}$ auxilary variables (see Theorem 1 in \citet{anthony2017quadratic}). Classical linearization techniques also introduce \textcolor{black}{an} exponentially large number of additional variables \citep{anthony2017quadratic} and will be impractical for large-scale problems arising from big data. However, several heuristic algorithms have been successful in solving moderate\textcolor{black}{ly} sized problems with binary variables when the objective function is quadratic \citep{boros2007local,glover2002one}. Researchers often use metaheuristics like \textcolor{black}{the} genetic algorithm \citep{gopalakrishnan2015operational} and neighbourhood search algorithm \citep{claudia127online} to solve nonlinear binary optimization problems. In recent years, a variety of commercial nonlinear programming solvers have been developed\textcolor{black}{;} however, their applicability is limited to very small scale instances (see \citet{cafieri2017mixed}). A more related work is \citet{islam2019robust} which developed scalable and efficient algorithms to solve nonlinear binary problems in robust causal hypothesis tests with binary outcome data. However, the proposed solution approach is very specific to the objective function structure and not generalizable to \textcolor{black}{the problem that deals with continuous outcomes}.  

\textcolor{black}{On the other hand, modern MIP solvers (i.e., Gurobi, Cplex, Mosek) have gained significant computational power over the last few decades due to the algorithmic development and improvement of hardware capability. For instance, we can solve many MIPs in seconds today which, 25 years ago, would have taken 71,000 years to solve \citep{bertsimas2019machine}. Usually, MIP solvers use a combination of branch-and-bound, cutting plane, and group-theoretic approaches to solve practical problems \citep{bixby2012brief}. Instead of employing specific branching rules, MIP solvers often use a hybrid branching strategy by combining rules like \textit{non-chimerical branching} \citep{fischetti2012branching}, \textit{reliability branching}, and \textit{inference branching} \citep{martin2005branching}. Cutting planes are a pivotal tool for the solvers, especially when solving integer linear programs (ILPs). Quadratic integer programs (QIP) also take advantage of the branch-and-bound algorithm where at each node, a quadratic program is solved using efficient techniques like barrier methods. In addition to these techniques, MIP solvers often use efficient preprocessing methods to tighten the original formulation and reduce the size of the problem \citep{Achterberg2013}. There are a wide range of starting and improvement heuristics available in the solvers that provide a good incumbent solution or a sufficient one when the problem is intractable (see the survey by \citet{fischetti2010heuristics}).}

\textcolor{black}{Apart from the algorithmic developments, modern MIP solvers take advantage of hardware technology by running multiple optimizers concurrently on multiple threads and choosing the best one, as we often do not know which algorithm, branching rule, or their combination would be most efficient for the problem at hand. To further increase the efficiency of MIP solver, researchers are exploring different machine learning \citep{he2014learning,alvarez2014supervised} and reinforcement learning \citep{etheve2020reinforcement} techniques for better branching and predicting the subtree size at a node. However, these MIP solvers cannot solve a general nonlinear problem. To leverage the improvements of the MIP solvers, we need a linear or at least a quadratic formulation of the current nonlinear problems in robust causal inference.     
}

In this paper, we consider the nonlinear-binary optimization models of robust causal inference tests proposed by \citet{morucci2018hypothesis} for continuous outcome data and develop a computational framework that can handle large-scale observational data. First, we propose a unique approach to reformulate the nonlinear binary optimization problems as equivalent and less-restrictive feasibility problems. By exploiting the structure of the feasibility problem, we then develop greedy algorithms to solve the original robust causal hypothesis test problems. The reformulation into \textcolor{black}{a} feasibility problem also provides an opportunity to formulate the robust test problems as quadratic integer programming problems which can take advantage of recent development\textcolor{black}{s} in commercial MIP solvers. Without this reformulation, we cannot use MIP solvers to get guaranteed optimality. A major advantage of the reformulation approach is that unlike state-of-the-art quadratization techniques, here, we do not need to add any additional variables. Moreover, the proposed unique reformulation approach can be leveraged to model general nonlinear and quadratic optimization problems (i.e., Quadratic Assignment Problem) as feasibility problems and to develop efficient algorithms. Nonetheless, the proposed algorithms are very efficient in solving practical sized problems and scalable to very large-scale problems. Our numerical experiments on real-world datasets show that the proposed computational framework provides the same inference to robust tests while taking a fraction of the time required by the exact method. The time complexity demonstrates that the developed algorithms are scalable enough to harness the power of big data in performing robust causal analysis which consequently will provide robust policy decisions.

The remainder of the paper is organized as follows. We discuss the matching method and the robust causal hypothesis test with continuous outcomes (i.e., Robust Z-test) along with its challenges in section 2. We provide a feasibility formulation of the robust Z-test in section 3. In section 4, we develop greedy algorithms to solve the reformulated Z-test and analyze the properties and complexity of the proposed algorithms. We apply our algorithms to three real-world datasets of varying sizes in section 5 and compare our result with Gurobi \textcolor{black}{and ILP-based heuristic proposed by \citet{morucci2018hypothesis}}. Finally, we provide concluding remarks in section 6.

\section{Matching method and Robust test}
In this section, we discuss the matching method in general and the robust causal hypothesis test with continuous outcomes. We first discuss the matching method, introduce necessary notations and required assumptions to make causal inference from observational data. We then discuss the integer programming model for \textcolor{black}{a} robust causal hypothesis test with continuous outcomes, existing methods to solve the robust test, and issues with the current solution method.

\subsection{Matching Method}
In this paper, we consider the potential outcome framework \citep{holland1986stat} which has led to the development of matching methods. Under the potential outcome framework, treatment effect or causal effect is measured by comparing the counterfactual: difference in the potential outcomes of a sample unit in both treated and control scenario. The fundamental problem in causal inference is that we can only observe one scenario (either treated or control) for each sample. A sample $i \in \mathscr{S}$ where $\mathscr{S}:= \{ 1,2,\cdots, N \}$ can only have outcome $Y_i = Y_i^1 T_i + Y_i^0 (1-T_i)$ where $T_i \in \{0,1\}$ \citep{holland1986stat}. Here $Y_i^1$ represents outcome under treatment (i.e., $T_i = 1$) and $Y_i^0$ represents outcome without treatment (i.e., $T_i = 0$). This problem is overcome by considering the average of treatment effect $\tau = Y^1 - Y^0$ over study population $\mathscr{S}$. While this strategy works for randomized experiments, experiments with observational data produce biased inference due to the distributional differences of the groups which received treatment (i.e., $\mathscr{T} \subset \mathscr{S}$) and the group which did not (i.e., $\mathscr{C} \subset \mathscr{S}$) on some pre-treatment covariates.

The matching method provides an unbiased estimation of \textcolor{black}{the} treatment effect by identifying pairs $(t,c)$ where $t \in \mathscr{T}$ and $c \in \mathscr{C}$ or subsets $\mathcal{T} \subset \mathscr{T}$ and $\mathcal{C} \subset \mathscr{C}$ are matched exactly in terms of their covariate set $\mathbf{X} \in \mathcal{X}$ \citep{rosenbaum1983central,stuart2010matching}. However, it is almost impossible to find exact matches even with a small number of covariates \citep{zubizarreta2012using,rosenbaum1983central}. A large number of matching methods have been developed to make $(t,c)$ pairs or $(\mathcal{T}, \mathcal{C})$ subsets as similar as possible in terms of $\mathbf{X}$ \citep{stuart2010matching,nikolaev2013balance}. In this paper, we will restrict the scope to one-to-one matching and consider the matching algorithms that identify $(t,c)$ pairs. Some commonly used matching methods are Propensity Score Matching \citep{rosenbaum1983central}, Nearest Neighbor Matching \citep{stuart2010matching}, Optimal Matching \citep{rosenbaum1989optimal}, Mahalanobis Distance Matching \citep{rubin:1979}, and Genetic Matching \citep{diamond2013genetic}.  One of the popular methods (if not the most popular) \citep{stuart2010matching,zubizarreta2012using} is the propensity score matching method \citep{rosenbaum1983central} that employs a logistic model to estimate each sample's propensity of receiving treatment and find the $(t,c)$ pairs \textcolor{black}{that are} minimizing the differences in their propensity scores. The matching process is repeated and evaluated iteratively until the desired quality of the matches is achieved. Once a suitable set of matches are identified, we can test the null hypothesis \eqref{hp:ate} and \eqref{hp:att} to make causal inference. $\mathcal{H}_0^{SATE}$ in \eqref{hp:ate} tests the zero sample average treatment effect (SATE) hypothesis on the whole sample set and $\mathcal{H}_0^{SATT}$ in \eqref{hp:att} tests the zero sample average treatment effect hypothesis on the treated (SATT) samples. Here, we make the assumptions that are commonly used in causal inference literature (see \ref{app:1}). 
\begin{align}
    & \mathcal{H}_0^{SATE} := \mathbb{E}_{Y|X} [ Y^1-Y^0 \vert \mathbf{X} ] = 0  \label{hp:ate}\\
    & \mathcal{H}_0^{SATT} := \mathbb{E}_{Y|X}[ Y^1-Y^0 \vert \mathbf{X}, T=1 ]= 0 \label{hp:att}
\end{align}

\subsection{Uncertainty due to the choice of the experimenter}
As we mentioned in section 2.1, matching methods aim to minimize a specific set of criteria to find the matched pairs $(t,c)$ in one-to-one matching. In causal inference literature, there are plentiful algorithms to find such matched pair sets\textcolor{black}{;} however, \textcolor{black}{it} offer\textcolor{black}{s} little to no clear guidance on the choices of matching procedure \citep{morgan2015counterfactuals}. A common practice among researchers and practitioners is to use widely-adopted and -cited techniques and software \citep{morucci2018hypothesis}. Therefore, starting with the same dataset and the same matching objectives, two experimenters can get two different sets of matched pairs. As the pair assignment process does not consider outcomes, traditional inference techniques do not guarantee the same inference by both experimenters. For instance, treated unit $t\in \mathscr{T}$ can have multiple potential assignments $\left \{ c_1, c_2, \cdots, c_n \right  \} \in \mathscr{C}$ with equal match quality but different outcomes. Each possible assignment will potentially have a different treatment effect: $Y_t^1 - Y_{c_1}^0 \neq  Y_t^1 - Y_{c_2}^0 \neq \cdots \neq Y_t^1 - Y_{c_n}^0$ which creates an uncertainty in the inference. Moreover, this phenomenon is more prominent in big data observational studies due to the availability of more pairing options.

\subsection{Robust approach for causal hypothesis tests }
To make a causal inference that is robust to the choice of the experimenter, recently, \citet{morucci2018hypothesis} proposed a new methodology based on discrete optimization technique. Before discussing its difference with the classical method of making causal inference \citep{rosenbaum1985constructing,holland1986stat}, we need to define the pair assignment variables $a_{i,j}$ and the set of good matches $\mathscr{M}$.

\begin{definition}
\label{def:good_match}
\textup{(A set of Good Matches)} $\mathscr{M}$ includes treated samples $\mathscr{T} \subset \mathscr{S}$ and control samples $\mathscr{C} \subset \mathscr{S}$ that satisfies user-defined matching criteria such that $\mathscr{M} := \left \{(t_i,c_j) \in (\mathscr{T} \times \mathscr{C}) : t_i \simeq c_j \vert \mathbf{X}    \right \}$.
\end{definition}
The set of good matches $\mathscr{M}$ can be stored in a logical matrix $D^{|\mathscr{T}| \times |\mathscr{C}|}$. Here, an element in $D$, $d_{i,j} = 1$ if treated unit $i$ is a good match to a control unit $j$, otherwise 0.

\begin{definition}
\label{def:pair_op}
\textup{(Pair Assignment Operator)} $a_{ij} \in \left \{ 0,1 \right \}$ is the pair assignment operator where $a_{ij} = 1$ if treated unit $t_i \in \mathscr{T}$ is paired with a control unit $c_j \in \mathscr{C}$ and the pair $(t_i,c_j) \in \mathscr{M}$; $a_{ij} = 0$ otherwise. 
\end{definition}

Here, we are interested \textcolor{black}{in testing} the causal hypothesis \eqref{hp:ate} or \eqref{hp:att} given a good set of matches $\mathscr{M}$. The classical approach selects one matched pair set from $\mathscr{M}$, calculates the test statistic $\Lambda$, and makes inference based on $\Lambda$ or the corresponding $P$-value. On the other hand, the robust approach explores all possible pair assignments within $\mathscr{M}$ without replacement by calculating the maximum and minimum test statistics $(\Lambda_{max}, \Lambda_{min})$. Using $(\Lambda_{max}$, $\Lambda_{min})$, a robust test can be defined as the following.  

\begin{definition}
\label{defrobust_test}
\textup{(Robust Test)} Given a level of significance $\alpha$ to test the hypothesis $\mathcal{H}_0$ and $(\Lambda_{max}, \Lambda_{min})$ calculated from $\mathscr{M}$, the test is called $\alpha$-robust \textcolor{black}{if \textup{$\lvert$p-value($\Lambda_{max}$) - p-value($\Lambda_{min}$)}$\rvert\leq \alpha$.} The test $\mathcal{H}_0$ is called absolute-robust when \textup{p-value($\Lambda_{min}$) $=$ p-value($\Lambda_{max}$)}.
\end{definition}

However, computing the test statistics require to solve binary optimization problems and their structure depends on the nature of the test statistics. For example, the McNemar\textcolor{black}{'s} test \citep{mcnemar1947note} and the Z-test \citep{low2016enhancing} proposed by \citet{morucci2018hypothesis} for binary and continuous data, respectively, produce nonlinear binary optimization problems. While they ensure robustness in inference, nonlinear-binary optimization problems are extremely difficult to solve, even for smaller instances. \citet{islam2019robust} developed efficient algorithms for the McNemar\textcolor{black}{'s} test with binary outcomes by converting the original nonlinear problem into a counting problem. The robust tests with the continuous outcome, on the other hand, remain difficult to solve for practical sized problems. In the following section, we discuss the robust Z-test model, the current solution approach and challenges with the current approach.

\subsection{Robust Z-test and Challenges}
To test the zero causal effect hypothesis with continuous outcomes, one can consider the canonical Z-test \citep{morucci2018hypothesis} with the test statistics $\Lambda :=Z(\mathbf{a}) = \frac{\sqrt{n}(\hat{\tau} - 0)}{\hat{\sigma}}$. Here, $n$ is the number of matched pairs, $\hat{\tau} = \frac{1}{n} \sum_{i \in \mathscr{T}}\sum_{j \in \mathscr{C}} (y^t_i - y^c_j)a_{i,j}$ is the average treatment effect among the matched pairs, and $\hat{\sigma}$ is the sample standard deviation of the treatment effect. Given a set of good matches $\mathscr{M}$, an integer programming formulation of the Z-test proposed by \citet{morucci2018hypothesis} is provided in \eqref{z:obj}-\eqref{z:bin}.   
\begin{align}
     \textrm{max/min } Z(\mathbf{a})  & = \frac{\frac{1}{\sqrt{n}} \sum_{i \in \mathscr{T}}\sum_{j \in \mathscr{C}} (y^t_i - y^c_j)a_{i,j}}{\sqrt{\frac{1}{n} \sum_{i \in \mathscr{T}}\sum_{j \in \mathscr{C}} [(y^t_i - y^c_j)a_{i,j}]^2 - (\frac{1}{n} \sum_{i \in \mathscr{T}}\sum_{j \in \mathscr{C}} (y^t_i - y^c_j)a_{i,j})^2} } \label{z:obj}\\
  \textrm{subject to: }   & \sum_{i \in \mathscr{T}} a_{ij} \leq 1 \quad \forall j \label{z:angn1} \\
    & \sum_{j \in \mathscr{C}} a_{ij} \leq 1 \quad \forall i \label{z:angn2} \\
    & a_{i,j} \leq d_{i,j} \quad \forall i,j \label{z:dij} \\
    & \sum_{i \in \mathscr{T}} \sum_{j \in \mathscr{C}} a_{ij} = n \label{z:angn_n}\\
    & a_{ij} \in \left \{ 0,1 \right \} \quad \forall i,j \label{z:bin}
\end{align}
\textcolor{black}{Here, the objective function $Z(\mathbf{a})$ represents the test statistic $\Lambda$ and the denominator in the objective function (equation \eqref{z:obj}) is the sample standard deviation ($\hat{\sigma}$) of the treatment effect.} As we can see, to compute the test statistic, we have to solve above nonlinear-binary optimization problems. The current solution approach proposed in \cite{morucci2018hypothesis} linearizes the above model by imposing an upper bound on $\hat{\sigma}$: $\sum_{i \in \mathscr{T}}\sum_{j \in \mathscr{C}} [(y^t_i - y^c_j)a_{i,j}]^2 \leq b_l$ and replacing the objective with max/min $\sum_{i \in \mathscr{T}}\sum_{j \in \mathscr{C}} (y^t_i - y^c_j)a_{i,j}$. Therefore, the linearized Z-test model takes the following form.
\begin{align}
     \textrm{max/min } & \sum_{i \in \mathscr{T}}\sum_{j \in \mathscr{C}} (y^t_i - y^c_j)a_{i,j} \label{z:lin}\\
  \textrm{subject to: }   & \sum_{i \in \mathscr{T}}\sum_{j \in \mathscr{C}} [(y^t_i - y^c_j)a_{i,j}]^2 \leq b_l \quad \textrm{   and constraints \eqref{z:angn1}-\eqref{z:bin}} \label{z:bl}
\end{align}

The integer linear program (ILP)-based heuristic of \citet{morucci2018hypothesis} solve the model \eqref{z:lin}-\eqref{z:bl} iteratively by changing the upper bound of standard deviation $b_l$ on a grid. Starting with a coarse grid of $b_l$, the proposed algorithm solves a series of ILPs and creates a new, refined mesh at each iteration. The iterative process continues on the grid of $b_l$ until the desired level of tolerance on the upper bound $b_l$ is achieved. While this innovative approach provides a working solution to a complex problem, it faces several challenges in practice. The first challenge is \textcolor{black}{that} at each grid point we need to solve an ILP  and after refining the grid of $b_l$ we have to solve the ILP with updated constraints. As we have to iterate over the grid of $b_l$ many times, the ILP has to be solved hundreds of times if not thousands. One can solve small ILP instances with commercial MIP solvers efficiently\textcolor{black}{;} however, in today's big data world such smaller problems are highly unlikely. In addition, solving hundreds of ILPs add a significant computational burden. The second challenge is \textcolor{black}{that} the range of $b_l$ can be very large which significantly increases the number of ILPs we have to solve to calculate the test statistics. For instance, in a case study with Bikeshare data \citep{fanaee2014event}, the range of $b_l$ is 1.12 million to 26.12 million. 

To overcome these challenges, in this paper, we reformulate the nonlinear-binary formulation of the Z-test into a less-restricted feasibility problem. By leveraging the structure of the feasibility problem, we develop greedy algorithms that are very efficient and scalable to big data observational experiments. The feasibility formulation also allows us convert the Z-test problems into a quadratic integer programs to take advantage of recent developments in MIP solvers.

\section{Reformulation of Z-test}
\textcolor{black}{The optimization model discussed for the robust Z-test in \eqref{z:obj}-\eqref{z:bin} has a fractional objective in the form of $ Z(\mathbf{a}) = \frac{f(x)}{g(x)}$). Both $f(x)$ and $g(x)$ can be written as a function of treatment effect between treated sample $i$ and control sample $j$: $(y^t_i - y^c_j)a_{i,j}$. In addition, we can bound $Z(\mathbf{a})$ in the range of $Z(\mathbf{a}) \leq |4|$ as beyond this range, we will have approximately zero area under the standard normal distribution curve and optimizing further beyond this range will not make any difference in the inference. For instance, let's assume we find a sub-optimal solution for the maximization problem, a set of treated-control pair assignments for which $Z(\mathbf{a}) = 4.10$. In theory, we can find a global optimal solution better than the current solution; however, improving the solution quality further will not change the robust inference of the hypothesis test. Using this property of the optimization model, we reformulate the robust Z-test problem as a feasibility problem.} 

To reformulate the robust Z-test as feasibility problem\textcolor{black}{,} let's assume \textcolor{black}{that} for the minimization model $\exists$ $a_{i,j}$ such that $Z(\mathbf{a}) \leq \gamma$ where, $\gamma$ is a scalar parameter. \textcolor{black}{We can iterate over different values of $\gamma$ to find its optimal value.} Then we have the following:
\begin{align}
    & \frac{1}{\sqrt{n}} \sum_{i\in \mathscr{T}}\sum_{j \in \mathscr{C}} (y^t_i - y^c_j)a_{i,j} \leq \gamma \times \sqrt{\frac{1}{n} \sum_{i\in \mathscr{T}} \sum_{j \in \mathscr{C}} [(y^t_i - y^c_j) a_{i,j}]^2 - (\frac{1}{n} \sum_{i\in \mathscr{T}}\sum_{j \in \mathscr{C}} (y^t_i - y^c_j)a_{i,j})^2 } \label{feas:root} \\
    & \frac{1}{n} (\sum_{i\in \mathscr{T}}\sum_{j \in \mathscr{C}} (y^t_i - y^c_j)a_{i,j})^2 \leq \gamma^2 \frac{1}{n} \sum_{i\in \mathscr{T}} \sum_{j \in \mathscr{C}} [(y^t_i - y^c_j) a_{i,j}]^2 - \gamma^2 (\frac{1}{n} \sum_{i\in \mathscr{T}}\sum_{j \in \mathscr{C}} (y^t_i - y^c_j)a_{i,j})^2 \label{feas}
\end{align}
It is important to note that in the above equation \eqref{feas}, we are taking the square of an inequality
\textcolor{black}{for further simplification. Both sides of the inequality \eqref{feas:root}, can be negative and non-negative depending on the intervals of sum of treatment effects ($\sum_{i\in \mathscr{T}}\sum_{j \in \mathscr{C}} (y^t_i - y^c_j)a_{i,j}$) and $\gamma$. Therefore, to ensure the correct direction of the inequality, we must consider four possible combinations of the sum of treatment effects and $\gamma$. We consider these combinations in four cases and discuss them in the following subsection.} \textcolor{black}{Before we discuss the four cases, let us introduce the proposed feasibility formulation by assuming} \textcolor{black}{equation} \eqref{feas} is a valid inequality. Simplifying \textcolor{black}{equation} \eqref{feas} further, we get the following quadratic constraint.
\begin{align}
    (1+\gamma^2 \frac{1}{n}) (\sum_{i\in \mathscr{T}}\sum_{j \in \mathscr{C}} (y^t_i - y^c_j)a_{i,j})^2 - \gamma^2 \sum_{i\in \mathscr{T}} \sum_{j \in \mathscr{C}} [(y^t_i - y^c_j) a_{i,j}]^2 \leq 0
\end{align}
Therefore, the nonlinear-binary optimization problem can be framed as the following binary-feasibility problem: $\exists \textrm{ a set of assignments } a_{i,j} \textrm{ so that } Z(\mathbf{a}) \leq \gamma$ with constraints \eqref{feas:quad}-\eqref{feas:bin}.
\begin{align}
    & (1+\gamma^2 \frac{1}{n}) (\sum_{i\in \mathscr{T}}\sum_{j \in \mathscr{C}} (y^t_i - y^c_j)a_{i,j})^2 - \gamma^2 \sum_{i\in \mathscr{T}} \sum_{j \in \mathscr{C}} [(y^t_i - y^c_j) a_{i,j}]^2 \leq 0 \label{feas:quad}\\
   & \sum_{i \in \mathscr{T}} a_{ij} \leq 1 \quad \forall j \\
   &\sum_{j \in \mathscr{C}} a_{ij} \leq 1 \quad \forall i 
\end{align} 
\begin{align}
   & a_{i,j} \leq d_{i,j} \quad \forall i,j \\
   & \sum_{i \in \mathscr{T}} \sum_{j \in \mathscr{C}} a_{ij} = n \label{feas:fixed_pair}\\
   & \textrm{Additional constraints to validate the inequality \eqref{feas:quad}} \\
   & a_{ij} \in \left \{ 0,1 \right \} \quad \forall i,j \label{feas:bin}
\end{align}

\paragraph{Advantage of the proposed feasibility formulation.} The reformulation \eqref{feas:quad}-\eqref{feas:bin} converts a nonlinear optimization problem into an equivalent and less-restrictive feasibility problem that offers several advantages. 

Firstly, it presents an opportunity to formulate the robust Z-test problem as a quadratic integer program (QIP) (see section 4). While general nonlinear integer optimization problems are difficult to solve, QIP formulation can benefit from the recent developments in MIP solvers. 

Secondly, feasibility formulation facilitates new algorithmic development. For example, we can show that the reformulated problem \eqref{feas:quad}-\eqref{feas:bin} is a convex-feasibility problem with binary variables. Solving the feasibility problem for any $\gamma$ and finding an optimal $\gamma$ in the range of $-4 \leq \gamma \leq 4$ with a binary search algorithm can solve the robust Z-test problem. Note that the range of $\gamma$ is fixed for all datasets and very small compared to $b_l$ used in the \textcolor{black}{ILP-based} approach, as beyond $Z(\mathbf{a}) = |4|$ will have approximately zero area under the standard normal distribution curve. \textcolor{black}{Similar to the Z-test, most parametric hypothesis tests (i.e., Student's \textit{t}-test, Welch's \textit{t}-test, \textit{F}-test, $\chi^2$-test) use test statistics in $\frac{f(x)}{g(x)}$ form where both $f(x)$ and $g(x)$ are functions of data, and the test statistics follow certain distributions. If those tests are conducted in the spirit of the \textit{hacking interval} proposed by \citet{hacking:2018}, regardless of the application domain we can use the bounded nature of the distribution of test statistics and its structure to create a feasibility formulation. The proposed feasibility reformulation scheme will provide guidance to develop scalable solution techniques for other robust hypothesis tests.} \textcolor{black}{With regard to} solving the feasibility problem, convex feasibility problems with continuous variables \textcolor{black}{have} been studied extensively in the optimization literature \citep{proj_book}, and iterative projection-based algorithms \citep{morshed2021sampling,convfeas1,convexfeas2,convexfeas3,convexfeas4} \textcolor{black}{have proven} proved to be very efficient in solving such problems. Unfortunately, to our best knowledge, there is no \textcolor{black}{efficient} algorithm available to solve convex feasibility problem with binary variables. Recently, projection\textcolor{black}{-}based algorithms are developed \citep{chubanov2015polynomial,chubanov2012strongly,basu2014chubanov} to solve linear feasibility problems with binary variables. We believe that new algorithms can be developed for solving convex feasibility problem with integer variables. Apart from causal hypothesis test problem, other types of optimization problems such as Quadratic Assignment Problem (QAP) \citep{pardalos1998quadratic} can be formulated as feasibility problem and can be solved with iterative projection-based algorithms. 

Finally, this unique reformulation simplifies the structure of the problem. By leveraging the structure, we can develop algorithms to solve such a computationally expensive problem. In that vein, we develop several greedy schemes to solve the robust Z-test problems that are efficient and scalable to harness the power of big data in causal analysis. 

%With the above formulation and for a fixed $\gamma$, the Z test problem becomes a binary feasibility problem such that there exist a smallest possible $\gamma$ in the range of $-\infty\leq\gamma \leq \infty$ for which the problem is feasible and any value $\gamma - \epsilon$ for a $\epsilon > 0$, the problem becomes infeasible. The maximization model would follow the exact same structure except that the direction of the first two inequality will be reversed. This formulation of the Z test provides the opportunity to solve the problem in a finite number of iteration on $\gamma$ as the sample space of $\gamma$ is quite small. Any $\gamma \geq k$ (for a $k \ll \infty$ but in practice, $k \geq 4$) will have approximately zero area under the standard normal distribution curve and we can say the same for any $\gamma \leq -k$ due to the symmetry. For the maximization (minimization) problem, if $\exists$ a set of assignment $a_{ij}$ for which $Z(\mathbf{a}) \geq k$ ($Z(\mathbf{a}) \leq -k$), then we can conclude that an optimal assignment is achieved. Another major advantage

\subsection{Cases for Minimization problem}
To simplify the test statistic $Z(\mathbf{a})$ and formulate it as a feasibility problem, we took the square of an inequality in equation \eqref{feas}. To ensure the validity of this inequality, we need to add additional constraints that lead to four possible cases. The resulting cases along with the constraints are presented below. It is important to note that $\hat{\sigma} = \sqrt{\frac{1}{n} \sum_{i\in \mathscr{T}} \sum_{j \in \mathscr{C}} [(y^t_i - y^c_j) a_{i,j}]^2 - (\frac{1}{n} \sum_{i\in \mathscr{T}}\sum_{j \in \mathscr{C}} (y^t_i - y^c_j)a_{i,j})^2 } > 0$, therefore, will not influence the cases.

\textbf{Case 1}: For this case, we consider $\gamma \geq 0$ and $\sum_{i\in \mathscr{T}}\sum_{j \in \mathscr{C}} (y^t_i - y^c_j)a_{i,j} \geq 0$. Now, the equation \eqref{feas:root} can be written as $\frac{1}{\sqrt{n}}\sum_{i\in \mathscr{T}}\sum_{j \in \mathscr{C}} (y^t_i - y^c_j)a_{i,j} \leq \gamma \hat{\sigma}$. Here, $\gamma \geq 0, \hat{\sigma} >0$ and $\sum_{i\in \mathscr{T}}\sum_{j \in \mathscr{C}} (T_i -C_j)a_{i,j} \geq 0$. So, both sides of the inequality are positive. Taking the square will not change the sign of the inequality. Hence, we will have the following constraints.
\begin{align}
    & (1+\gamma^2 \frac{1}{n}) (\sum_{i\in \mathscr{T}}\sum_{j \in \mathscr{C}} (y^t_i - y^c_j)a_{i,j})^2 - \gamma^2 \sum_{i\in \mathscr{T}} \sum_{j \in \mathscr{C}} [(y^t_i - y^c_j) a_{i,j}]^2 \leq 0 \label{min_case1_nonlin} \\
    & \sum_{i\in \mathscr{T}}\sum_{j \in \mathscr{C}} (y^t_i - y^c_j)a_{i,j} \geq 0 \label{min_case1_lin}
\end{align}

\textbf{Case 2}: For this case, we consider $\gamma \leq 0$ and $\sum_{i\in \mathscr{T}}\sum_{j \in \mathscr{C}} (y^t_i - y^c_j)a_{i,j} \leq 0$. As both sides of the inequality \eqref{feas:root} are negative, the sign of the quadratic constraint will change and we will have the following constraints. 
\begin{align}
    & (1+\gamma^2 \frac{1}{n}) (\sum_{i\in \mathscr{T}}\sum_{j \in \mathscr{C}} (y^t_i - y^c_j)a_{i,j})^2 - \gamma^2 \sum_{i\in \mathscr{T}} \sum_{j \in \mathscr{C}} [(y^t_i - y^c_j) a_{i,j}]^2 \geq 0 \label{min_case2_nonlin}\\
    & \sum_{i\in \mathscr{T}}\sum_{j \in \mathscr{C}} (y^t_i - y^c_j)a_{i,j} \leq 0 \label{min_case2_lin}
\end{align}

\textbf{Case 3}: For this case, we consider $\gamma \geq 0$ and $\sum_{i\in \mathscr{T}}\sum_{j \in \mathscr{C}} (y^t_i - y^c_j)a_{i,j} \leq 0$. As $\sum_{i\in \mathscr{T}}\sum_{j \in \mathscr{C}} (y^t_i - y^c_j)a_{i,j} \leq 0$, left side of the inequality \eqref{feas:root} is non-positive but, $\gamma \textcolor{black}{\geq} 0$ and $\hat{\sigma} >0$ that makes the right side non-negative. Since, this is true for all $\gamma \in \mathbb{R}$, $\gamma \geq 0$, then, we have,
\begin{align}
    & \frac{1}{\sqrt{n}}\sum_{i\in \mathscr{T}}\sum_{j \in \mathscr{C}} (y^t_i - y^c_j)a_{i,j} \leq \min_{\gamma \in \mathbb{R}} (\gamma \hat{\sigma}) = 0
\end{align}
Therefore, we will have the following constraints. For this case, our non-linear feasibility problem becomes a linear feasibility problem. 
\begin{align}
    & \sum_{i\in \mathscr{T}}\sum_{j \in \mathscr{C}} (y^t_i - y^c_j)a_{i,j} \leq 0
\end{align}

\textbf{Case 4:} For this case, we consider $\gamma \leq 0$ and $\sum_{i\in \mathscr{T}}\sum_{j \in \mathscr{C}} (y^t_i - y^c_j)a_{i,j} \geq 0$. As the left side of the inequality \eqref{feas:root} is non-negative and the right side is non-positive, the above equation only holds at equality: $\sum_{i\in \mathscr{T}}\sum_{j \in \mathscr{C}} (y^t_i - y^c_j)a_{i,j} = 0$. However, this specific case is redundant as it is already considered in the other cases. All of the above-mentioned cases and resulting constraints are summarized in table \ref{tab:min_cases}.

\textcolor{black}{Each of the above case has two conditions, one on the sum of treatment effect ($\sum_{i\in \mathscr{T}}\sum_{j \in \mathscr{C}} (y^t_i - y^c_j)a_{i,j}$) and another on $\gamma$.
The condition of sum of treatment effect $\sum_{i\in \mathscr{T}}\sum_{j \in \mathscr{C}} (y^t_i - y^c_j)a_{i,j} \geq 0$ (or $\leq 0$) depends on the assignment variable; therefore, this condition is included in the optimization model as a constraint in equation (19). For the case 1 in the minimization problem, this constraint takes the form of equation (22).
$\gamma$ is not dependent on the assignment variable, so we do not include it in the optimization model. The condition on $\gamma$ is considered after solving the assignment problems as discussed in the following section. }

\begin{table}[htbp]
  \begin{center}

  \caption{Cases and resulting constraints for the minimization problem feasibility formulation of robust Z-test.}
  {\small
  \adjustbox{max width=\textwidth}{
    \begin{tabular}{|c|c|c|}
    \hline
       Case & Case Constraints   & Quadratic Constraint \\
    \hline
       1 & $\gamma \geq 0$, $\sum_{i\in \mathscr{T}}\sum_{j \in \mathscr{C}} (y^t_i - y^c_j)a_{i,j} \geq 0$  &  $(1+\gamma^2 \frac{1}{n}) (\sum_{i\in \mathscr{T}}\sum_{j \in \mathscr{C}} (y^t_i - y^c_j)a_{i,j})^2 - \gamma^2 \sum_{i\in \mathscr{T}} \sum_{j \in \mathscr{C}} [(y^t_i - y^c_j) a_{i,j}]^2 \leq 0$ \\
    
    \hline
       2 & $\gamma \leq 0$, $\sum_{i\in \mathscr{T}}\sum_{j \in \mathscr{C}} (y^t_i - y^c_j)a_{i,j} \leq 0$   & $(1+\gamma^2 \frac{1}{n}) (\sum_{i\in \mathscr{T}}\sum_{j \in \mathscr{C}} (y^t_i - y^c_j)a_{i,j})^2 - \gamma^2 \sum_{i\in \mathscr{T}} \sum_{j \in \mathscr{C}} [(y^t_i - y^c_j) a_{i,j}]^2 \geq 0$ \\
    \hline
       3 & $\gamma \geq 0$, $\sum_{i\in \mathscr{T}}\sum_{j \in \mathscr{C}} (y^t_i - y^c_j)a_{i,j} \leq 0$   & No quadratic constraint\\
    \hline
       4 & $\gamma \leq 0$, $\sum_{i\in \mathscr{T}}\sum_{j \in \mathscr{C}} (y^t_i - y^c_j)a_{i,j} \geq 0$   &  No quadratic constraint (Redundant, only true when both inequalities are zero) \\
    \hline
    \end{tabular}}%
  \label{tab:min_cases}%
    }
  \end{center}
\end{table}%

%\normalsize
The maximization model follows the same argument. For completeness, we provide the corresponding cases of the maximization problem in \ref{app:2}.

\section{Algorithmic Approach}
In this section, we develop greedy algorithms to solve the derived cases of the robust Z-test by exploiting the structure of the feasibility problem formulated in the previous section. First, we consider case 1 of the minimization problem. In case 1, apart from the assignment constraints, we have to satisfy the constraints \eqref{min_case1_nonlin} and \eqref{min_case1_lin} to calculate $\gamma^*$ in the range $\gamma \geq 0 $. Simplifying the constraint \eqref{min_case1_nonlin} will result in the following:
\begin{align}
&  ( \frac{n\gamma^2 }{n+\gamma^2}  )\sum_{i\in \mathscr{T}} \sum_{j \in \mathscr{C}} \left [ (y^t_i - y^c_j) a_{i,j} \right ]^2 \geq [ \sum_{i\in \mathscr{T}}\sum_{j \in \mathscr{C}} (y^t_i - y^c_j)a_{i,j} ]^2 \label{min_case1_nonlin_simple}
\end{align}
As our objective is to find a set of assignments $a_{i,j}$ that produces the smallest value of $\gamma$ while satisfying the constraints \eqref{min_case1_nonlin_simple} and \eqref{min_case1_lin}, we can exploit the structure of equation \eqref{min_case1_nonlin_simple}. Note that in equation \eqref{min_case1_nonlin_simple}, for \textcolor{black}{any} number of samples ($n$), the minimum possible value of $\gamma$ is possible when $\sum_{i\in \mathscr{T}} \sum_{j \in \mathscr{C}} [ (y^t_i - y^c_j) a_{i,j}  ]^2$ is maximized and $[ \sum_{i\in \mathscr{T}}\sum_{j \in \mathscr{C}} (y^t_i - y^c_j)a_{i,j} ]^2$ is minimized which can be written as the following optimization problem. 
\begin{align}
    & \max_{a_{i,j} \in \mathscr{M}} \sum_{i\in \mathscr{T}} \sum_{j \in \mathscr{C}} \left [ (y^t_i - y^c_j) a_{i,j} \right ]^2 - [ \sum_{i\in \mathscr{T}}\sum_{j \in \textcolor{black}{\mathscr{C}}} (y^t_i - y^c_j)a_{i,j}  ]^2 \  \text{S.t.: Constraints \eqref{z:angn1} to \eqref{z:bin}, and \eqref{min_case1_lin}}  \label{min_case1_coup2}
\end{align}

The above Quadratic Integer Program (QIP) \eqref{min_case1_coup2} can be solved with any commercial MIP solver. Using the solution of \eqref{min_case1_coup2} and a pre-specified $n$, we can calculate the optimal solution $\gamma^*$ by solving the quadratic equation \eqref{gamma} for $\gamma$. \textcolor{black}{As the condition for case 1 of the minimization problem is $\gamma \geq 0$, the $\gamma^*$ would be minimum non-negative solution of $\gamma$ from equation \eqref{gamma}.} Similarly, we can develop QIPs for case 1 and case 2 of the both minimization and maximization problems (see table \ref{tab:QIP}). The details on QIP formulation is provided in \ref{app:3}. Case 3 for both the problems results in a linear feasibility problem.  
\begin{align}
\label{gamma}
   ( \frac{n\gamma^2 }{n+\gamma^2} )\sum_{i\in \mathscr{T}} \sum_{j \in \mathscr{C}}  [ (y^t_i-y^c_j) a_{i,j}  ]^2 = [ \sum_{i\in \mathscr{T}}\sum_{j \in \mathscr{C}} (y^t_i-y^c_j)a_{i,j}  ]^2
\end{align}

% Table generated by Excel2LaTeX from sheet 'Bike share'
\begin{table}[htbp]
  \begin{center}
  \caption{Quadratic integer programs (QIPs) for the robust Z-test cases. All the QIPs are subject to constraints \eqref{z:angn1} to \eqref{z:bin} and the case constraints.}
  {\small
  \adjustbox{max width=\textwidth}{
    \begin{tabular}{|c|c|c|}
    \hline
    \multicolumn{3}{|c|}{Minimization problems} \\
    \hline
    \multicolumn{1}{|l|}{Case} & \multicolumn{1}{l|}{Case Constraints} & \multicolumn{1}{l|}{QIP} \\
    \hline
       1   &    $\gamma \geq 0$, $\sum_{i\in \mathscr{T}}\sum_{j \in \mathscr{C}} (y^t_i - y^c_j)a_{i,j} \geq 0$   & $\max_{a_{i,j} \in \mathscr{M}} \sum_{i\in \mathscr{T}} \sum_{j \in \mathscr{C}} \left [ (y^t_i - y^c_j) a_{i,j} \right ]^2 - [ \sum_{i\in \mathscr{T}}\sum_{j \in \mathscr{C}} (y^t_i - y^c_j)a_{i,j}  ]^2 $ \\
    \hline
       2   &    $\gamma \leq 0$, $\sum_{i\in \mathscr{T}}\sum_{j \in \mathscr{C}} (y^t_i - y^c_j)a_{i,j} \leq 0$   &  $\max_{a_{i,j} \in \mathscr{M}} [ \sum_{i\in \mathscr{T}}\sum_{j \in \mathscr{C}} (y^t_i - y^c_j)a_{i,j}  ]^2 -  \sum_{i\in \mathscr{T}} \sum_{j \in \mathscr{C}} \left [ (y^t_i - y^c_j) a_{i,j} \right ]^2 $\\
    \hline
    \multicolumn{3}{|c|}{Maximization problems} \\
    \hline
        1  &   $\gamma \geq 0 $, $ \sum_{i\in \textcolor{black}{\mathscr{T}}} \sum_{j \in \textcolor{black}{\mathscr{C}}} [(\textcolor{black}{y^t_i} -\textcolor{black}{y^c_j}) a_{i,j}] \geq 0 $    &  $\max_{a_{i,j} \in \mathscr{M}} [ \sum_{i\in \mathscr{T}}\sum_{j \in \mathscr{C}} (y^t_i - y^c_j)a_{i,j}  ]^2 -  \sum_{i\in \mathscr{T}} \sum_{j \in \mathscr{C}} \left [ (y^t_i - y^c_j) a_{i,j} \right ]^2 $ \\
    \hline
        2  &   $\gamma \leq 0 $, $ \sum_{i\in \textcolor{black}{\mathscr{T}}} \sum_{j \in \textcolor{black}{\mathscr{C}}} [(\textcolor{black}{y^t_i} -\textcolor{black}{y^c_j}) a_{i,j}] \leq 0 $    & $\max_{a_{i,j} \in \mathscr{M}} \sum_{i\in \mathscr{T}} \sum_{j \in \mathscr{C}} \left [ (y^t_i - y^c_j) a_{i,j} \right ]^2 - [ \sum_{i\in \mathscr{T}}\sum_{j \in \mathscr{C}} (y^t_i - y^c_j)a_{i,j}  ]^2 $ \\
    \hline
    \end{tabular}}%
    }
  \label{tab:QIP}%
  \end{center}
\end{table}%
%The following identity will be used to derive the feasible gamma solution for all of the above-mentioned cases.

Even though we converted the general nonlinear integer optimization problems to QIPs, it is still computationally expensive to solve practical sized problems. Therefore, in the following, we develop greedy algorithms to solve the QIPs in table \ref{tab:QIP}. We propose three algorithms for solving both the minimization and maximization problem. First, in Algorithm \ref{alg:aa1}, we propose a greedy scheme for efficiently solving Case 1 and Case 2 of the minimization problem. Second, in Algorithm \ref{alg:max}, we extend a variant of this setup to maximization problem for Case 1 and Case 2. The Case 3 of both minimization and maximization problems are discussed in algorithm \ref{alg:aahung}. Finally, in Algorithm \ref{alg:master}, we provided a combined framework for solving the Z-test problem using Algorithms \ref{alg:aa1}-\ref{alg:aahung}. Before we delve into the details of the algorithms, first let us define the following constants for algorithm \ref{alg:aa1}:
\begin{align}
    \text{Case 1:} \ \ \alpha_1 = 1, \ \alpha_2 = 0, \quad \quad  \text{Case 2:} \ \ \alpha_1 = 0, \ \alpha_2 = -1.
\end{align}
Moreover, for all $i \in \mathscr{T}, \ j \in \mathscr{C} $ we set, $\Delta_{ij} = [y^t_i-y^c_j] \odot \mathbf{D}_{ij}$ and construct the list $\Upsilon$ as follows: 
\begin{align}
\label{sort}
   & \Upsilon[r, (i_r,j_r)] = (\Upsilon)_r = \Delta_{i_rj_r}; \quad  \Upsilon[r, (i_r,j_r)] \leq \Upsilon[r+1, (i_{r+1},j_{r+1})], \ \Delta_{i_rj_r} \neq 0 \  \ \forall r \in \|\mathbf{D}\|_0
\end{align}
This list will be used as input list $\Upsilon$ in Algorithms \ref{alg:aa1} and \ref{alg:max}.
\begin{algorithm}[ht!]
\caption{$\gamma = \mathcal{G}_1(\Upsilon, n, \alpha_1, \alpha_2)$}
\label{alg:aa1}
\begin{algorithmic}
\STATE{Initialize $  k \leftarrow 0, \ \epsilon_k \leftarrow \mathbf{0}$, $l \leftarrow  |\Upsilon|$; }
\WHILE{$k \leq \alpha_1\ceil[\big]{\frac{n}{2}} + |\alpha_2| n$ \textbf{or} $\epsilon_k > 0, \ $}
\STATE{\textbf{If} $ \alpha_2 \Upsilon[1, (i_1,j_1)] + \alpha_1 \Upsilon[l, (i_l,j_l)] < 0$}
\STATE{\quad \quad \textbf{Stop}; No feasible solution.}
\STATE{\textbf{Else} }
\STATE{\quad \quad \textbf{If} $|\Upsilon[1, (i_1,j_1)]| - \alpha_2 \Upsilon[1, (i_1,j_1)] -\alpha_1 \Upsilon[l, (i_l,j_l)] \geq 0$ \\
\quad \quad \quad \quad Assign $a_{i_l,j_l} = 1 $.}
\STATE{\quad \quad \textbf{Else} \\
\quad \quad \quad \quad Assign $a_{i_r,j_r} = 1 $ such that $|\Upsilon[1, (i_1,j_1)]+\Upsilon[r, (i_r,j_r)]|$ is minimized.}
\STATE{\quad \quad Denote the assigned entry as $a_{i_{\bar{r}}j_{\bar{r}}}$ and remove the entries of the list that contains indices $i_{\bar{r}}$ or $j_{\bar{r}}$.}
\STATE{ \quad \quad  $ \epsilon_k \leftarrow \epsilon_k+ \Upsilon[\bar{r}, (i_{\bar{r}},j_{\bar{r}})]$;}
\STATE{\textbf{If} $\alpha_1 = 1$ \\
\quad \quad Assign $a_{i_p, j_p} =1 $ such that $\Upsilon[p, (i_p,j_p)] + \Upsilon[\bar{r}, (i_{\bar{r}},j_{\bar{r}})] \geq 0$ and is minimized. Remove the entries that contains the indices $i_p$ or $j_p$ from the list.}
\STATE{\quad \quad  $ l \leftarrow l-2$;}
\STATE{\quad \quad  $k \leftarrow k+1$;}
\STATE{\textbf{Else}}
\STATE{\quad \quad $ l \leftarrow l-1$;}
\STATE{\quad \quad  $k \leftarrow k+1$;}
\ENDWHILE
\RETURN $\gamma = \alpha_2 \gamma_{\min} + \alpha_1 \gamma_{\max} $; \  $\gamma_{\min}$ and $\gamma_{\max}$ are respectively the minimum and maximum roots of equation \eqref{gamma}.
\end{algorithmic}
\end{algorithm}

In a similar fashion, the following constants will be used in Algorithm \ref{alg:max}:
\begin{align}
    \text{Case 1:} \ \ \beta_1 = -1, \ \beta_2 = 0, \quad \quad  \text{Case 2:} \ \ \beta_1 = 0, \ \beta_2 = 1.
\end{align}

\begin{algorithm}[ht!]
\caption{$\gamma = \mathcal{G}_2(\Upsilon, n, \beta_1, \beta_2)$}
\label{alg:max}
\begin{algorithmic}
\STATE{Initialize $  k \leftarrow 0, \ \epsilon_k \leftarrow \mathbf{0}$, $l \leftarrow  |\Upsilon|$; }
\WHILE{$k \leq \beta_2\ceil[\big]{\frac{n}{2}} + |\beta_1| n$ \textbf{or} $\epsilon_k > 0, \ $}
\STATE{\textbf{If} $ \beta_1 \Upsilon[1, (i_1,j_1)] + \beta_2 \Upsilon[l, (i_l,j_l)] < 0$}
\STATE{\quad \quad \textbf{Stop}; No feasible solution.}
\STATE{\textbf{Else} }
\STATE{\quad \quad \textbf{If} $(1+\beta_1)|\Upsilon[1, (i_1,j_1)]| -\beta_2 \Upsilon[l, (i_l,j_l)] \leq 0$ \\
\quad \quad \quad \quad Assign $a_{i_l,j_l} = 1 $.}
\STATE{\quad \quad \textbf{Else} \\
\quad \quad \quad \quad Assign $a_{i_r,j_r} = 1 $ such that $\Upsilon[l, (i_l,j_l)]-\Upsilon[r, (i_r,j_r)]$ is maximized.}
\STATE{\quad \quad Denote the assigned entry as $a_{i_{\bar{r}}j_{\bar{r}}}$ and remove the entries of the list that contains indices $i_{\bar{r}}$ or $j_{\bar{r}}$.}
\STATE{ \quad \quad  $ \epsilon_k \leftarrow \epsilon_k+ \Upsilon[\bar{r}, (i_{\bar{r}},j_{\bar{r}})]$;}
\STATE{\textbf{If} $\beta_2 = 1$ \\
\quad \quad Assign $a_{i_p, j_p} =1 $ such that $\Upsilon[p, (i_p,j_p)] - |\Upsilon[\bar{r}, (i_{\bar{r}},j_{\bar{r}})]| $ and is maximized. Remove the entries that contains the indices $i_p$ or $j_p$ from the list.}
\STATE{\quad \quad  $ l \leftarrow l-2$;}
\STATE{\quad \quad  $k \leftarrow k+1$;}

\STATE{\textbf{Else}}
\STATE{\quad \quad $ l \leftarrow l-1$;}
\STATE{\quad \quad  $k \leftarrow k+1$;}

\ENDWHILE
\RETURN $\gamma = \beta_2 \gamma_{\min} + \beta_1 \gamma_{\max} $; \  $\gamma_{\min}$ and $\gamma_{\max}$ are respectively the minimum and maximum roots of equation \eqref{gamma}.
\end{algorithmic}
\end{algorithm}

In Algorithm \ref{alg:aa1}, we consider Case 1 and Case 2. For Case 3, we can design an exact method by leveraging the assignment problem structure. Note that for this case, the nonlinear feasibility problem becomes a linear feasibility problem with constraint $\sum_{i\in \mathscr{T}} \sum_{j \in \mathscr{C}} [(y^t_i - y^c_j) a_{i,j}] \leq 0 $ along with necessary assignment constraints. Solving this linear feasibility problem is similar to solving an assignment problem where the assignment cost of assigning the $i^{th}$ treated sample to the $j^{th}$ control sample is $y^t_i - y^c_j$. Using this idea, we propose the \textcolor{black}{algorithm \ref{alg:aahung}} to solve the Case 3 of the minimization problem. 
\textcolor{blue}{
\begin{algorithm}[ht!]
\caption{$\gamma = \mathcal{G}_3(\Delta, n)$}
\label{alg:aahung}
\begin{algorithmic}
\STATE{\textbf{Step 1}: Put a sufficiently large number $\mathcal{M}$ where assignments are not possible. Convert the negative entries of $\Delta$ to non-negative by adding $\min \Delta_{i,j}$ to all the elements. Assume the new cost matrix is $\Delta^{'}$.} 
\STATE{\textbf{Step 2}: Solve the assignment problem for cost matrix $\Delta^{'}$ with Hungarian/Munkre's algorithm to minimize (for the minimization problem)/maximize (for the maximization problem).}
\STATE{\textbf{Step 3}: For a given $n$, take the first $n$ assignments achieved in Step 2 and calculate the total cost of the assignment using the original cost matrix $\Delta$. Set, $\epsilon^k =$ total assignment cost of $ n$ pairs. If $\epsilon^k \leq 0$ (maximization, $\epsilon^k \geq 0$) then, $\gamma^* = 0$, otherwise no feasible solution is possible.}
\end{algorithmic}
\end{algorithm}
}

For case 3 of the maximization problem, we follow the same scheme except in the maximization problem, instead of minimizing we maximize the assignment cost.

\begin{algorithm}[ht!]
\caption{$\gamma^* = \mathcal{A}(\Delta, n)$}
\label{alg:master}
\begin{algorithmic}
\STATE{Sort the list $\Delta$ following \eqref{sort}.}
\STATE{For the minimization problem:}
\STATE{Solve $ \gamma = \mathcal{G}_1(\Upsilon, n, 0, -1)$}
\STATE{\quad \textbf{If} No feasible solution.}
\STATE{\quad \quad Solve $\gamma = \mathcal{G}_3(\Delta, n)$}
\STATE{\quad \quad \quad \textbf{If} No feasible solution.}
\STATE{\quad \quad \quad \quad Solve $ \gamma = \mathcal{G}_1(\Upsilon, n, 1, 0)$}
\STATE{\quad \quad \quad \quad \quad \textbf{If} No feasible solution.}
\STATE{\quad \quad \quad \quad \quad \quad $n$ pairs are not possible.}
\STATE{\quad \quad \quad \textbf{Else} return the current $\gamma$ as optimal.}
\STATE{\quad \textbf{Else} return the current $\gamma$ as optimal.}
\STATE{For the maximization problem:}
\STATE{Solve $ \gamma^* = \mathcal{G}_2(\Upsilon, n, -1, 0)$}
\STATE{\quad \textbf{If} No feasible solution.}
\STATE{\quad \quad Solve $\gamma = \mathcal{G}_3(\Delta, n)$}
\STATE{\quad \quad \quad \textbf{If} No feasible solution.}
\STATE{\quad \quad \quad \quad Solve $ \gamma^* = \mathcal{G}_2(\Upsilon, n, 0, 1)$}
\STATE{\quad \quad \quad \quad \quad \textbf{If} No feasible solution.}
\STATE{\quad \quad \quad \quad \quad \quad $n$ pairs are not possible.}
\STATE{\quad \quad \quad \textbf{Else} return the current $\gamma$ as optimal.}
\STATE{\quad \textbf{Else} return the current $\gamma$ as optimal.}
\RETURN $\gamma^* $.
\end{algorithmic}
\end{algorithm}

\iffalse

\begin{algorithm}
\caption{Algorithm: $\gamma = \mathcal{H}(y^t, y^c, \epsilon,n)$}
\label{alg:aa}
\begin{algorithmic}
\STATE{Initialize $  k \leftarrow 0, \ \mathbf{\Delta}_k \leftarrow \mathbf{0}$, $\sigma \leftarrow \sum \limits \Delta_{ij}$; }
\STATE{For all $i \in Q, \ j \in R $ set, $\Delta_{ij} = \left[y^t_i-y^c_j\right] \odot \mathbf{D}_{ij}$ and the construct the list $\Upsilon$ as follows: 
\begin{align*}
   & \Upsilon[r, (i_r,j_r)] = (\Upsilon)_r = \Delta_{i_rj_r}; \quad  \Upsilon[r, (i_r,j_r)] \leq \Upsilon[r+1, (i_{r+1},j_{r+1})], \ \Delta_{i_rj_r} \neq 0 \  \ \forall r \in \|\mathbf{D}\|_0, \\
   & \mathcal{F}(k,r) =  \begin{cases}
   |\Upsilon[k, (i_k,j_k)]| - \Upsilon[r, (i_r,j_r)], \quad \textbf{Case 1}\\
   \Upsilon[r, (i_r,j_r)] -  |\Upsilon[k, (i_k,j_k)]|, , \quad \textbf{Case 2}
   \end{cases}
\end{align*}}
\WHILE{$k \leq \left \lfloor{\frac{n}{2}}\right \rfloor$ \textbf{or} $\mathbf{\Delta}_k > 0, \ $}
\STATE{}
\STATE{ Step 1. \textbf{If} $\mathcal{F}(k,r)  \geq 0$, then assign $a_{i_k,j_k} = a_{i_r,j_r} = 1 $, and update the list.}
\STATE{\quad \quad  \quad \quad \quad  \quad $ \sigma \leftarrow \sigma- \Upsilon[r, (i_r,j_r)] - \Upsilon[k, (i_k,j_k)]$;}
\STATE{\quad \quad  \quad \quad \quad  \quad $k \leftarrow k+1$; \\
\quad \quad \quad \textbf{Else} $r \leftarrow  r-1$ and go to Step 1.
}
\ENDWHILE
\RETURN $\gamma^*$ by solving \eqref{gamma}.
\end{algorithmic}
\end{algorithm}

\fi

\subsection{Properties of the greedy algorithms}
As we are making assignments in a greedy way, we may achieve a local optimal solution instead of a global optimal solution. In addition, we may have a fewer number of matched pairs than an exact method, as greedy approach ignores the combinatorial nature of the problem. In this section, we discuss properties of the algorithms \textcolor{black}{that} show that our algorithms can achieve a global optimal solution in some cases\textcolor{black}{,} and for specific matching restrictions, we will have the same number of pairs as an exact method. In addition, we discuss the time complexity of the proposed algorithms.

%To prove the optimality of the solution, we need to show that in greedy assignment the objective value is not dependent on the order of assignments. We can do that with following propositions.

\begin{proposition}
\label{prop:1}
If the set of good matches $\mathscr{M}$ are identified through exact matching then, $\mathbf{\Delta}$ can be partitioned into a set of disjoint matrices $\{\Delta^1, \Delta^2, \cdots, \Delta^r, \cdots\}$ and each row in $\Delta^r$ is identical.
\end{proposition}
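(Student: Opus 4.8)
The plan is to derive the block structure of $\mathbf{\Delta}$ directly from what exact matching means: a pair $(t_i,c_j)$ belongs to $\mathscr{M}$ (equivalently $d_{i,j}=1$) if and only if the two units have coincident covariate vectors, $\mathbf{X}_{t_i}=\mathbf{X}_{c_j}$. First I would make this a genuine equivalence relation on $\mathscr{T}\cup\mathscr{C}$ by grouping units that share a common covariate value, and enumerate the distinct covariate values $v_1,v_2,\dots$ that actually occur. Setting $\mathscr{T}_r=\{i\in\mathscr{T}:\mathbf{X}_{t_i}=v_r\}$ and $\mathscr{C}_r=\{j\in\mathscr{C}:\mathbf{X}_{c_j}=v_r\}$, these families partition $\mathscr{T}$ and $\mathscr{C}$ into strata, discarding any value for which one side is empty since it contributes no admissible pair.

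The structural core is the observation that, under exact matching, $d_{i,j}=1$ holds precisely when $i$ and $j$ lie in a common stratum, i.e.\ $i\in\mathscr{T}_r$ and $j\in\mathscr{C}_r$ for the same $r$. Hence every entry of $\mathbf{D}$, and therefore every entry $\Delta_{i,j}=(y^t_i-y^c_j)\,d_{i,j}$, is supported inside one of the index rectangles $\mathscr{T}_r\times\mathscr{C}_r$, and distinct rectangles share no row index and no column index. Reordering rows and columns so that each stratum's indices are contiguous then exhibits $\mathbf{D}$ (and $\mathbf{\Delta}$) as block-diagonal; I would define $\Delta^r$ to be the submatrix of $\mathbf{\Delta}$ on $\mathscr{T}_r\times\mathscr{C}_r$, which yields the claimed partition into disjoint matrices. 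For the row property, I would note that any two treated units $t_i,t_{i'}$ in the same stratum have identical admissible-control sets, since for every $j$ one has $d_{i,j}=1\iff\mathbf{X}_{c_j}=v_r\iff d_{i',j}=1$; thus all rows of the block share the common support $\mathscr{C}_r$ and, read as rows of the logical block, are identical, each row of $\Delta^r$ being the vector $(y^t_i-y^c_j)_{j\in\mathscr{C}_r}$ taken over the same fixed list of controls.

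The step I expect to demand the most care is pinning down what ``identical'' should mean and, relatedly, defining the partition by the \emph{index} rectangles coming from $\mathbf{D}$ rather than by the nonzero pattern of $\mathbf{\Delta}$ itself: because $\Delta_{i,j}$ can vanish when $y^t_i=y^c_j$ even though $d_{i,j}=1$, a partition read off from the sparsity of $\mathbf{\Delta}$ would be ill-defined, whereas the stratum rectangles are forced by the matching and are genuinely disjoint. At the level of the eligibility/support structure the rows of each $\Delta^r$ are identical as argued above, which is exactly the feature the subsequent greedy assignment exploits; literal entrywise equality of the numerical rows would require the further hypothesis that the treated outcomes $y^t_i$ agree within a stratum, which I would state explicitly if that stronger reading is intended.
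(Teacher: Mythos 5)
Your proof is correct and follows essentially the same route as the paper's: exact matching makes covariate equality transitive, so treated and control units fall into strata whose rectangles $\mathscr{T}_r\times\mathscr{C}_r$ carry all the nonzero entries of $\mathbf{D}$, and reordering exhibits $\mathbf{D}$ (hence $\mathbf{\Delta}$) as a union of disjoint blocks with identical rows. Your closing caveat is also well placed: the paper's own proof only establishes that the rows of $D$ (i.e., the supports) are identical within a block and then passes to $\mathbf{\Delta}$ via the entrywise product, which is exactly the support-level reading of ``identical'' you identify, and it is all that the subsequent order-independence proposition actually uses.
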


\begin{proof}
In exact matching, a treated sample $t_i$ can be matched to a control sample $c_{j_1}$ if covariate vector $\mathbf{X}_i^t = \mathbf{X}_{j_1}^c$. In the matrix $D$ (representing the set of good matches $\mathscr{M}$), the entry $d_{i,j_1} := 1$, if $\mathbf{X}_i^t = \mathbf{X}_{j_1}^c$, otherwise $d_{i,j_1} := 0$. Let's assume, treated sample $t_i$ can be matched the set of control sample $c^{'} = \{ c_{j_1}, c_{j_2},c_{j_3}, \cdots, c_{j_q}\}$. As the matching is done exactly, we can say that $\mathbf{X}_i^t = \mathbf{X}_{j_1}^c = \mathbf{X}_{j_2}^c = \mathbf{X}_{j_3}^c = \cdots = \mathbf{X}_{j_q}^c$. So, $d_{i,j_l} := 1$, $\forall l \in \{1,2,...,q\}$ and the rest of the values in $i^{th}$ row of the matrix $D$ are zero.  Now, assume treated unit $t_h$ is a good match to a control sample $c_k \in c^{'}$ then, by definition of exact match, $t_h$ is a good match for all the control samples in $c^{'}$. Hence, the vector $d_{h,j_l}$ will be identical to $d_{i,j_l}$. Now, by reorganizing the rows of $D$, we can partition it into disjoint matrices where rows within each of the matrices are identical. As $\mathbf{\Delta} = S \odot D$, it can be partitioned into disjoint matrices $\{\Delta^1, \Delta^2, \cdots, \Delta^r, \cdots\}$.
\end{proof}

From proposition \ref{prop:1}, we see that the $\mathbf{\Delta}$ can be partitioned into disjoint matrices and rows within a partitioned matrix are identical. So, by making greedy assignments, we will not lose other possible assignments at any point in the future.

\begin{proposition}
\label{prop:2}
If $(y_i^t - y_j^c)a_{i,j}$ is considered as the assignment cost of assigning treated unit $i$ to control unit $j$ where $a_{i,j} \in \{0,1\}$ is an assignment variable, then the total assignment cost $\sum_{i \in \mathscr{T}} \sum_{j \in \mathscr{C}} (y_i^t - y_j^c)a_{i,j}$ is independent of the order of assignment in $\Delta^r$.
\end{proposition}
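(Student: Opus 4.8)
The plan is to exploit the additive separability of the pairwise cost $(y_i^t - y_j^c)$ together with the block structure furnished by Proposition \ref{prop:1}. First I would fix a single block $\Delta^r$ and use the fact that, under exact matching, the rows of $\Delta^r$ coincide, so every treated unit appearing in the block is matchable to every control unit appearing in it. Consequently the block behaves as a complete bipartite structure: any selection of treated and control units inside $\Delta^r$ can be paired up in any way without violating the matchability constraint \eqref{z:dij}, and no pairing committed early by a greedy scheme can render a later re-pairing infeasible.

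Next I would write the total cost of an arbitrary assignment supported on $\Delta^r$ and split each pair's contribution. For any collection of committed pairs inside the block,
\begin{align*}
\sum_{i \in \mathscr{T}}\sum_{j \in \mathscr{C}} (y_i^t - y_j^c)\,a_{i,j} \;=\; \sum_{i \,:\, \sum_{j} a_{i,j}=1} y_i^t \;-\; \sum_{j \,:\, \sum_{i} a_{i,j}=1} y_j^c,
\end{align*}
since each assigned pair contributes its treated outcome with a plus sign and its control outcome with a minus sign, while the assignment constraints \eqref{z:angn1}--\eqref{z:angn2} ensure that every unit is consumed at most once. The right-hand side is manifestly a function only of \emph{which} treated units and \emph{which} control units are used, and carries no dependence on \emph{how} those units are matched to one another.

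From this separation the conclusion follows directly: any two orderings of the within-block assignment that consume the same treated units and control units must produce the same total, so permuting the pairing—equivalently, changing the order in which the assignments are committed within $\Delta^r$—leaves $\sum_{i}\sum_{j}(y_i^t - y_j^c)a_{i,j}$ unchanged. I would then observe that because the blocks $\{\Delta^1,\Delta^2,\dots\}$ are pairwise disjoint, this per-block invariance aggregates to order-independence of the global total.

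The step needing the most care is verifying that every re-ordering of the within-block assignments is itself \emph{feasible}, i.e.\ that swapping the partners of two committed pairs never matches a treated unit to a control unit it is not allowed to match. This is exactly where Proposition \ref{prop:1} does the work: because the rows of $\Delta^r$ are identical, the matchable control set is common to all treated units in the block, so any permutation of partners stays inside $\mathscr{M}$. Once feasibility of re-pairings is secured, the additive decomposition above makes the invariance of the total immediate.
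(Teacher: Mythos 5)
Your proposal is correct and takes essentially the same route as the paper's proof: both regroup the total cost $\sum_{i}\sum_{j}(y_i^t - y_j^c)a_{i,j}$ into a sum of the selected treated outcomes minus a sum of the selected control outcomes, concluding that the total depends only on \emph{which} units are consumed and not on how they are paired, with Proposition \ref{prop:1} supplying the identical-rows block structure. Your explicit check that within-block re-pairings remain feasible (and the aggregation over disjoint blocks) is handled only implicitly in the paper, but it is the same underlying argument.
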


\begin{proof}
From proposition 1, we know that in $\Delta^r$ each row has non-zero elements in same columns and respective partition of $D$ has identical rows. Now assume a possible assignment $\mathcal{A}_{1} = \{a_{1,1} = 1, a_{2,2} = 1, \cdots, a_{m,m} = 1, \cdots \}$. The total cost of assignment of $\mathcal{A}_1$ is  the following:
\begin{align}
   \sum_{i \in \mathscr{T}} \sum_{j \in \mathscr{C}} (y_i^t - y_j^c)a_{i,j} & = (y_1^t - y_1^c) + (y_2^t - y_2^c) + \cdots + (y_m^t - y_m^c) + \cdots \\
   & = (y_1^t + y_2^t + \cdots + y_m^t + \cdots) - (y_1^c + y_2^c + \cdots + y_m^c + \cdots) \label{indep}
\end{align}
Now, from the above expression \eqref{indep}, we see that the total cost is independent of the pair information $a_{i,j}$, i.e., which treated unit is paired with which control unit. Therefore, in each partition of $\mathbf{\Delta}$, the total cost of the assignment is independent of the order of pair assignment.
\end{proof}

\begin{proposition}
\label{prop:3}
The greedy scheme proposed in algorithm \ref{alg:aa1} provides an optimal solution of case 2 of the minimization problem when $n$ pairs are matched, there are at least $n$ possible pairs with $\Delta_{ij} \leq 0$, and $\mathscr{M}$ is identified with exact matching. 

%\begin{align*}
%    & \max_{a_{i,j} \in \mathscr{M}} \left ( \sum_{i\in \mathscr{T}}\sum_{j \in \mathscr{C}} (y_i^t - y_j^c)a_{i,j} \right )^2  - \sum_{i\in \mathscr{T}} \sum_{j \in R} \left [ (y_i^t - y_j^c) a_{i,j} \right ]^2 \\
%    & \text{subject to} \sum_{i\in \mathscr{T}} \sum_{j \in \mathscr{C}} (y_i^t - y_j^c) a_{i,j} \geq 0  \quad \textrm{   and constraints \eqref{z:angn1}-\eqref{z:bin}} \label{z:bl}
%\end{align*}
\end{proposition}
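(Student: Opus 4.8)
The plan is to reduce the Case~2 minimization to the quadratic integer program listed in Table~\ref{tab:QIP}, namely $\max_{a \in \mathscr{M}} A-B$ with $A = (\sum_{i \in \mathscr{T}}\sum_{j \in \mathscr{C}} (y^t_i - y^c_j)a_{i,j})^2$ and $B = \sum_{i \in \mathscr{T}}\sum_{j \in \mathscr{C}} [(y^t_i - y^c_j)a_{i,j}]^2$, and then to argue that Algorithm~\ref{alg:aa1} solves this QIP to global optimality under the three stated hypotheses. First I would make the reduction exact: solving equation~\eqref{gamma} for $\gamma$ gives $\gamma^2 = An/(nB-A)$, and since $\hat\sigma>0$ forces $nB-A>0$, this expression is strictly increasing in $A$ and strictly decreasing in $B$. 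Hence the Case~2 branch ($\gamma \le 0$, and $\sum_{i,j}(y^t_i-y^c_j)a_{i,j}\le 0$ from \eqref{min_case2_lin}) attains its most negative admissible root precisely when $A-B$ is maximized, so optimality of the assignment for the QIP is equivalent to optimality of the returned $\gamma^\ast$.

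Next I would settle feasibility and reduce the selection to a choice of a multiset of $\Delta$-values. Writing the chosen entries as $\delta_1,\dots,\delta_n$ with $\delta_k = (y^t_{i_k} - y^c_{j_k})$, a direct expansion yields $A-B = 2\sum_{k<l}\delta_k\delta_l$, so the objective depends only on the multiset $\{\delta_k\}$ and not on how treated and control indices are paired. The hypothesis that at least $n$ entries satisfy $\Delta_{ij}\le 0$ guarantees that the linear case constraint~\eqref{min_case2_lin} is satisfiable; this is exactly the condition tested in the first branch of Algorithm~\ref{alg:aa1}, so the scheme does not halt with ``no feasible solution'' and returns a valid set of $n$ pairs with nonpositive $\Delta$-values.

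The heart of the proof is to show the greedy multiset is globally optimal, and here I would lean on Proposition~\ref{prop:1} and Proposition~\ref{prop:2}. By Proposition~\ref{prop:1}, exact matching partitions $\mathbf{\Delta}$ into disjoint blocks with identical rows, so within a block a pair's value is determined by its column alone; by Proposition~\ref{prop:2}, the block's contribution to the linear sum, and hence to $A$, is invariant under re-pairing. Consequently, removing a used row or column index in Algorithm~\ref{alg:aa1} never destroys an attainable value, since any value the optimum would later draw from that block stays reachable through an interchangeable row. I would then run an exchange argument: starting from an arbitrary optimal matching, swap its pairs one at a time toward the greedy choices, using identical-row interchangeability to keep each swap feasible and Proposition~\ref{prop:2} to hold $A$ fixed block-by-block, and verify that no swap decreases $A-B$ until the greedy solution is reached. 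Finally I would recover $\gamma^\ast$ as the appropriate root of~\eqref{gamma} through the return statement with $(\alpha_1,\alpha_2)=(0,-1)$.

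The step I expect to be the main obstacle is the exchange argument itself. Because $A-B$ couples a ``maximize the squared sum $A$'' term against a ``minimize the sum of squares $B$'' term, a single local value choice can help one term while hurting the other, so the objective is not monotone in the individual magnitudes and a naive value-greedy need not be optimal. The crux is to invoke Proposition~\ref{prop:2} to freeze $A$ once the per-block cardinalities are fixed, so that the residual problem localizes to minimizing $B$ within each block, where the identical-row structure of Proposition~\ref{prop:1} makes the greedy order optimal. Making this block-localization rigorous, and checking that the fixed-cardinality requirement~\eqref{feas:fixed_pair} together with the ``at least $n$ nonpositive pairs'' hypothesis never forces a positive-valued entry into the optimal selection, is where the real work lies.
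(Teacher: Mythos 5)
Your overall route coincides with the paper's: pass to the quadratic objective $A-B$ of Table~\ref{tab:QIP} (with $A=[\sum_{i,j}(y^t_i-y^c_j)a_{i,j}]^2$, $B=\sum_{i,j}[(y^t_i-y^c_j)a_{i,j}]^2$), expand $A-B=2\sum_{k<l}\delta_k\delta_l$, and invoke Propositions~\ref{prop:1} and~\ref{prop:2} to exploit the disjoint-block structure induced by exact matching. The genuine gap is your very first step, where you declare the reduction to the QIP ``exact.'' From \eqref{gamma} one does get $\gamma^2=nA/(nB-A)$, but this quantity is invariant under the scaling $(A,B)\mapsto(tA,tB)$, i.e.\ it depends on $A$ and $B$ only through the ratio $A/B$, whereas $A-B$ is not a function of that ratio; coordinate-wise monotonicity (increasing in $A$, decreasing in $B$) does not force the maximizer of the difference to coincide with the maximizer of the ratio. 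Concretely, take one exactly-matched block with two treated units of outcome $0$ and three controls of outcomes $1,3,10$, and $n=2$: the selection $\{-1,-3\}$ gives $A-B=6$ and $\gamma=-2\sqrt{2}\approx-2.83$, while the greedy/QIP-optimal selection $\{-3,-10\}$ gives $A-B=60$ but only $\gamma\approx-2.63$. So maximizing $A-B$ does \emph{not} return the most negative admissible $\gamma$, and the strengthened statement you set out to prove under that equivalence is false. The paper never makes this claim: its proof reads ``case 2 of the minimization problem'' as the QIP itself --- the merging of the coupled problems into the single difference objective happens (heuristically) at the formulation stage in \ref{app:3} --- and it argues greedy optimality only for that objective.

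Beyond this, the portion you flag as the heart of the proof (the exchange argument) is exactly the portion the paper supplies, and it does so without exchange machinery: under the case constraint \eqref{min_case2_lin} and the hypothesis of $n$ available nonpositive pairs, all selected $\delta_k$ share a sign, so $A-B=2\sum_{k<l}\delta_k\delta_l$ is a sum of nonnegative products that only increases as the magnitudes $|\delta_k|$ increase; Proposition~\ref{prop:2} makes each block's sum independent of the pairing and Proposition~\ref{prop:1} makes the blocks disjoint, whence picking the most negative entries is optimal blockwise and therefore globally. Note also that your plan to ``freeze $A$ once the per-block cardinalities are fixed'' overstates Proposition~\ref{prop:2}: within a block the linear sum is invariant to the \emph{pairing} of a fixed set of selected treated and control units, not to which units are selected; two selections of equal cardinality generally yield different block sums $S_r$ (and hence different $A$), so the residual problem does not localize to minimizing $B$ alone.
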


\begin{proof}
In case 2 of the minimization problem, we are trying to maximize $[ \sum_{i\in \mathscr{T}}\sum_{j \in \mathscr{C}} (y_i^t - y_j^c)a_{i,j} ]^2  - \sum_{i\in \mathscr{T}} \sum_{j \in R} [ (y_i^t - y_j^c) a_{i,j}  ]^2$ with constraints \eqref{min_case2_lin}, and \eqref{z:angn1}-\eqref{z:bin}. This quantity is increasing in $|\sum_{i\in \mathscr{T}}\sum_{j \in \mathscr{C}} (y_i^t - y_j^c)a_{i,j}|$ when the difference between the outcomes $(y_i^t - y_j^c)a_{i,j}$ of all pairs are of the same sign. From proposition \ref{prop:2}, we can see that in each $\Delta^r \in \mathbf{\Delta}$ the cost $\sum_{i\in \mathscr{T}}\sum_{j \in \mathscr{C}} (y_i^t - y_j^c)a_{i,j}$ is independent of the order of assignment. Therefore, the greedy assignment will produce an optimal solution for each $\Delta^r \in \mathbf{\Delta}$. On the other hand, from proposition \ref{prop:1}, we see that each $\Delta^r \in \mathbf{\Delta}$ is disjoint when a good set of match $\mathscr{M}$ is identified with exact matching. Hence, the optimal solution of each disjoint set of possible assignments will produce an optimal solution of the complete assignment problem. 
\end{proof}

Case 1 of the maximization problem follows the same scheme as the case 2 of the minimization problem. Hence, the result in proposition \ref{prop:3} is also valid for case 1 of the maximization problem.

\begin{proposition}
\label{prop:heu}
Denote, $d = \max\left\{\ln{\|D\|_0},n\right\} $. Then the proposed greedy algorithms have running time complexity of $\mathcal{O} \left( d \|D\|_0  \right)$ and have storage cost of $\mathcal{O}(\|D\|_0 )$.
\end{proposition}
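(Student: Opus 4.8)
The plan is to separate the cost of the one-time preprocessing that produces the sorted list $\Upsilon$ from the work performed inside the greedy loops of Algorithms \ref{alg:aa1} and \ref{alg:max}, and then to fuse the two bounds using the definition $d = \max\{\ln\|D\|_0, n\}$. For the storage claim I would first note that the only object of non-trivial size kept in memory is the list $\Upsilon$, whose entries are exactly the nonzero elements of $\Delta = S \odot D$. Since $\Delta$ inherits its sparsity pattern from $D$, the number of such entries is precisely $\|D\|_0$, and the committed assignments are recorded implicitly through these entries; hence the memory footprint is $\mathcal{O}(\|D\|_0)$.

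For the running time I would split the cost into three contributions. First, collecting the nonzero differences into $\Upsilon$ costs $\mathcal{O}(\|D\|_0)$, and sorting these $\|D\|_0$ values according to \eqref{sort} (executed once inside Algorithm \ref{alg:master}) costs $\mathcal{O}(\|D\|_0 \log \|D\|_0) = \mathcal{O}(\|D\|_0 \ln \|D\|_0)$, since $\log$ and $\ln$ differ by a constant factor. Second, I would bound the number of passes of the WHILE loop: every pass increments $k$ while committing either one pair (Case 2, where $l$ drops by one) or two pairs (Case 1, where $l$ drops by two), so at most $\alpha_1\ceil{\frac{n}{2}} + |\alpha_2| n = \mathcal{O}(n)$ iterations occur before $n$ pairs are assigned. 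Third, within a single iteration the dominant operation is the search for the index $r$ (and, in Case 1, the index $p$) optimizing the stated criterion; because $\Upsilon$ is presorted, the endpoint accesses $\Upsilon[1,(i_1,j_1)]$ and $\Upsilon[l,(i_l,j_l)]$ are $\mathcal{O}(1)$, and even a naive linear scan for the optimizing entry costs at most $\mathcal{O}(\|D\|_0)$. The deletions of all entries sharing an index $i_{\bar r}$ or $j_{\bar r}$ with a committed pair remove each element at most once, so they contribute only $\mathcal{O}(\|D\|_0)$ in total over the entire run rather than per iteration.

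Combining these, the greedy loop costs $\mathcal{O}(n\|D\|_0)$ and the preprocessing costs $\mathcal{O}(\|D\|_0 \ln\|D\|_0)$, so the overall time is
\[
\mathcal{O}\!\left(\|D\|_0 \ln\|D\|_0 + n\|D\|_0\right) = \mathcal{O}\!\left(\|D\|_0\left(\ln\|D\|_0 + n\right)\right) \leq \mathcal{O}\!\left(2\,\|D\|_0\max\{\ln\|D\|_0, n\}\right) = \mathcal{O}\!\left(d\,\|D\|_0\right),
\]
where the inequality uses $\ln\|D\|_0 + n \le 2\max\{\ln\|D\|_0, n\}$. The identical bookkeeping applies verbatim to Algorithm \ref{alg:max}, since it shares the same loop skeleton with $(\beta_1,\beta_2)$ playing the role of $(\alpha_1,\alpha_2)$.

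The step I expect to require the most care is the iteration-count bound, because the WHILE guard couples a counter condition with the feasibility test $\epsilon_k > 0$. I would argue that $l$ strictly decreases on every pass and that no more than $n$ pairs are ever committed, which forces termination within $\mathcal{O}(n)$ rounds regardless of which branch executes. The second delicate point is the amortized accounting for the removals: charging each deletion to the unique element it eliminates keeps the cumulative deletion cost at $\mathcal{O}(\|D\|_0)$ instead of $\mathcal{O}(n\|D\|_0)$, and it is precisely this separation of the $\ln\|D\|_0$ (sorting) term from the $n$ (scanning) term that lets the $\max$ in the definition of $d$ absorb both contributions into the single factor $d\,\|D\|_0$.
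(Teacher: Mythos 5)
Your proposal is correct and follows essentially the same route as the paper's proof: a one-time sort of the $\|D\|_0$ nonzero entries costing $\mathcal{O}(\|D\|_0 \ln \|D\|_0)$, a greedy loop bounded by $\alpha_1\ceil[\big]{\frac{n}{2}} + |\alpha_2| n \leq n$ iterations with $\mathcal{O}(\|D\|_0)$ work per iteration, and the final combination $\mathcal{O}(\|D\|_0 \ln\|D\|_0 + n\|D\|_0) = \mathcal{O}(d\,\|D\|_0)$ together with the $\mathcal{O}(\|D\|_0)$ storage bound. Your amortized accounting of the deletions is a slightly finer touch than the paper's per-iteration charge, but it does not change the bound or the structure of the argument.
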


\begin{proof}
Note that if we run sorting algorithms such as \textit{HeapSort}, \textit{MergeSort} on a list of $n$ entries, the best worst-case running time complexity that can be achieved is $\mathcal{O}(n \ln{n})$. In our proposed schemes, we run a sorting algorithm on the respective $\mathbf{\Delta}$ matrix and then we input the sorted data on the main algorithms (Algorithm \ref{alg:aa1} and \ref{alg:max}). As both algorithms run in at most $n$ loops (i.e., $\alpha_1\ceil[\big]{\frac{n}{2}} + |\alpha_2| n, \beta_2\ceil[\big]{\frac{n}{2}} + |\beta_1| n \leq n$), we can calculate the running time complexity as follows:
\begin{align*}
    T_1(n) = n \left[1+1+\|D\|_0+ 2+ \|D\|_0\right] = \mathcal{O} \left(n \|D\|_0\right)
\end{align*}
Then, considering the time complexity of the sorting scheme, we can find the total time complexity of the proposed scheme as follows:
\begin{align*}
  T(n) & :=  \text{Sorting run time} + \text{Running time of Algorithm \ref{alg:aa1}}\text{ (or Algorithm }\ref{alg:max})\\
  & = \mathcal{O} \left( \|D\|_0 \ln{\|D\|_0}\right) + \mathcal{O} \left(n \|D\|_0 \right) = \mathcal{O} \left( \max\{n,  \ln{\|D\|_0}\} \|D\|_0  \right) = \mathcal{O} \left( d \|D\|_0  \right) 
\end{align*}
Here, we used the fact that we run the sorting scheme on the nonzero elements of the matrix $\mathbf{\Delta}$ which has a total of $\|D\|_0$ entries. It's easy to check that both schemes have a total storage cost of $\mathcal{O}(\|D\|_0 )$ as throughout the scheme, we only need to store at-most $\|D\|_0$ entries.
\end{proof}

\begin{proposition}
\label{prop:heu2}
The proposed heuristic for Case 3 runs in strongly polynomial time. Moreover, it will provide us an exact solution.
\end{proposition}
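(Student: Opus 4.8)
The plan is to prove Proposition \ref{prop:heu2} in two parts, corresponding to its two assertions: the strongly polynomial running time, and the exactness of the solution returned by Algorithm \ref{alg:aahung}. Recall that Case 3 of the minimization problem is a linear feasibility problem: we must decide whether there exists a set of $n$ valid assignments (respecting the matching matrix $D$ and the one-to-one constraints \eqref{z:angn1}--\eqref{z:bin}) for which $\sum_{i\in \mathscr{T}} \sum_{j \in \mathscr{C}} (y^t_i - y^c_j) a_{i,j} \leq 0$, and if so return $\gamma^* = 0$. The key observation, already exploited in the construction of Algorithm \ref{alg:aahung}, is that this reduces to a minimum-cost assignment problem with cost $\Delta_{ij} = y^t_i - y^c_j$ on allowed pairs and a prohibitively large cost $\mathcal{M}$ on disallowed ones.

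First I would establish the running-time claim. The algorithm consists of three steps: shifting all entries of $\Delta$ by $\min_{i,j}\Delta_{ij}$ to render them non-negative (and inserting the large constant $\mathcal{M}$ on forbidden pairs), solving the resulting assignment problem with the Hungarian/Munkres algorithm, and then summing the $n$ smallest-cost pairs evaluated under the original cost matrix. The preprocessing in Step 1 and the aggregation in Step 3 are clearly linear in the number of entries, i.e. $\mathcal{O}(\|D\|_0)$. For Step 2, I would invoke the well-known fact that the Hungarian algorithm solves the assignment problem on an $m \times m$ cost matrix in $\mathcal{O}(m^3)$ arithmetic operations, and crucially that this bound depends only on the dimensions of the problem and not on the magnitudes of the cost entries; this is precisely what it means for the algorithm to run in \emph{strongly} polynomial time. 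Since the number of samples is polynomially bounded, the overall procedure inherits strongly polynomial complexity.

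Second I would establish exactness. The argument is that minimizing the total assignment cost over all feasible sets of $n$ pairs yields the smallest achievable value of $\sum_{i,j} (y^t_i - y^c_j) a_{i,j}$; if this minimum is $\leq 0$, then Case 3's linear feasibility constraint is satisfiable and $\gamma^* = 0$ is indeed the optimum, whereas if even the minimum cost is strictly positive, no feasible assignment can meet the constraint and the algorithm correctly reports infeasibility. I would lean on Proposition \ref{prop:2}, which guarantees that the total cost depends only on which samples are selected and not on the pairing order, so that the Hungarian algorithm's optimal matching genuinely attains the global minimum cost over $n$ selected pairs. The role of the large constant $\mathcal{M}$ is to ensure that an optimal assignment never selects a forbidden pair unless no valid completion exists, thereby faithfully enforcing the $a_{i,j} \leq d_{i,j}$ constraints.

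The main obstacle I anticipate is the reconciliation between the \emph{full} assignment the Hungarian algorithm produces and the requirement of exactly $n$ pairs: the standard Hungarian algorithm matches as many rows and columns as possible, so I must argue carefully that taking the $n$ lowest-cost pairs from its output (as Step 3 prescribes) in fact coincides with the minimum-cost selection of $n$ pairs, and that this selection respects the one-to-one constraints. This requires either a matroid-intersection style exchange argument or a direct justification that, because of the partition structure from Proposition \ref{prop:1} and the order-independence of cost from Proposition \ref{prop:2}, truncating the optimal matching to its $n$ cheapest pairs does not violate optimality. I would also need to confirm that the shift by $\min_{i,j}\Delta_{ij}$ in Step 1 preserves the optimal matching, which follows because adding a constant to every cost entry changes the cost of any fixed-size matching by the same amount and hence does not alter the $\argmin$.
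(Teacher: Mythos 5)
Your proposal follows essentially the same route as the paper: the paper's proof is just two sentences, observing that the running time is $\mathcal{O}(n) + \mathcal{O}(n^3) = \mathcal{O}(n^3)$ (preprocessing plus the Hungarian--Munkres algorithm, whose bound is independent of cost magnitudes, hence strongly polynomial) and that exactness is inherited directly from the exactness of the Hungarian--Munkres scheme. Where you differ is in rigor rather than strategy: you explicitly verify that the constant shift by $\min_{i,j}\Delta_{ij}$ preserves the optimal matching (true, since every fixed-size matching's cost changes by the same amount), you justify the role of the large constant $\mathcal{M}$ in enforcing $a_{i,j} \leq d_{i,j}$, and—most importantly—you flag the gap between the full matching the Hungarian algorithm returns and the requirement of exactly $n$ pairs (Step 3 of Algorithm \ref{alg:aahung} takes the ``first $n$ assignments''). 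This last point is a genuine subtlety that the paper's proof silently skips: in general, truncating a minimum-cost maximum matching to its $n$ cheapest pairs does \emph{not} yield a minimum-cost matching of size $n$, so the exactness claim as stated needs exactly the kind of argument you sketch (e.g., via the partition structure of Proposition \ref{prop:1} and the order-independence of Proposition \ref{prop:2} under exact matching, or an exchange argument). You leave that argument as a plan rather than carrying it out, so your proposal is not yet a complete proof—but it is more honest about what must be shown than the paper's own proof, which asserts exactness by appeal to the Hungarian algorithm without addressing the size-$n$ restriction at all.
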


\begin{proof}
Since we are using the \textit{Hungarian-Munkres} algorithm for solving the main problem, we can calculate the time complexity of the proposed heuristic as follows:
\begin{align*}
    T(n) = \mathcal{O}(n) + \mathcal{O}(n^3) = \mathcal{O}(n^3)
\end{align*}
Here, we used the complexity result of \textit{Hungarian-Munkres} algorithm \citep{hungarian} and the initial time complexity of $\mathcal{O}(n)$. Furthermore, as the \textit{Hungarian-Munkres} scheme provides an exact solution, the proposed heuristic will also provide an exact solution.
\end{proof}

\section{Numerical Experiments}
In this section, we apply the proposed greedy algorithms to test causal hypotheses from three real-world datasets of varying size. We also compare the solution of the proposed algorithms to the Quadratic Integer Programming (QIP) models in table \ref{tab:QIP} solved with Gurobi 9.0.2 \citep{gurobi2020}  \textcolor{black}{and the ILP-based heuristic proposed by \citet{morucci2018hypothesis}.} It is important to note that we implemented a simpler but computationally less efficient version of the proposed algorithm as we use unsorted $\mathbf{\Delta}$ and find the maximum or minimum $\Delta_{i,j}$ at each iteration. \textcolor{black}{After calculating the maximum and minimum $Z(\mathbf{a})$ using the proposed greedy algorithms and alternative methods, we convert them into P-values using the relations in equation \eqref{p_max} and \eqref{p_min} as P-values are commonly used to make inference from the statistical test. }
\textcolor{black}{
\begin{align}
    \textrm{P-value}_{max} = \argmax_{\mathbf{a}\in \mathscr{M}} [1- \phi(Z(\mathbf{a}))] = 1 - \phi(\argmin_{\mathbf{a}\in \mathscr{M}}  Z(\mathbf{a})) \label{p_max}\\
    \textrm{P-value}_{min} = \argmin_{\mathbf{a}\in \mathscr{M}} [1- \phi(Z(\mathbf{a}))] = 1 - \phi(\argmax_{\mathbf{a}\in \mathscr{M}}  Z(\mathbf{a})) \label{p_min}
\end{align}
Here, $\phi(Z(\mathbf{a}))$ represents the area under the standard normal distribution curve to the right of $Z(\mathbf{a})$. For all the experiments, we consider the level of significance $\alpha = 0.05$ as a rule of thumb to make robust inference.} Even so, our experiments show that the proposed algorithm is scalable and can handle very large-scale problems while state-of-the-art commercial solvers either provide a worse solution or cannot solve problems of moderate to large sizes. While the experiments show interesting causal insights, we use them to demonstrate the effectiveness of our algorithms. All the experiments are performed in a Dell Precision 7510 workstation with Intel Core i7-6820HQ CPU running at 2.70 GHz, and 32GB RAM.

\subsection{Effect of fly ash on strength of concrete}
Fly ash, a by-product of thermal power plants, is a common element in producing concrete \citep{flyash}. In this experiment, we hypothesize that fly ash has zero effect on concrete's compressive strength. We use the Concrete Compressive Strength Dataset \citep{concrete:data} to test the causal hypothesis using the robust Z-test. The dataset has 1030 instances and 9 attributes. The control group includes 529 samples with no fly ash component wherein the treatment group has 501 samples with at least 24.5 $kg/m^3$ fly ash. We perform the matching operation on seven pre-treatment covariates. A treated unit $i$ is a good match with control unit $j$ (i.e., $d_{i,j} = 1)$ if their differences in Cement, Blast Furnace Slag, and Water are less or equal to 30, the difference in Superplasticizer is less or equal to 20, fine and coarse aggregate is less or equal to 50, and age is less or equal to 5. The outcome is concrete's compressive strength. The matching process resulted in a group of 68 treated samples that can be matched with 60 control samples where many treated samples have multiple pair assignment options. The $D$ matrix has 146 non-zero entries which indicate that we need to solve a nonlinear optimization problem with 146 binary variables.
 
Table \ref{tab:concrete} shows the maximum and minimum Z-values achieved for different numbers of pairs ($n$) with the greedy algorithms (GA) and solving the QIPs in table \ref{tab:QIP} with Gurobi 9.0.2. For QIP, we used the stopping criteria as 500 seconds time limit or a 2\% optimality gap. As we can see that the QIP provides better solutions compared to the proposed greedy algorithm. However, the greedy approach takes a fraction of a second while QIP takes more than 500 seconds. For the minimization problem with the number of pairs higher than 24, the QIP achieves a 2\% optimality gap in a fairly short duration but, after that, it takes days to reach optimality. The resulting P-values are presented in Figure \ref{fig:concrete}. \textcolor{black}{In Figure \ref{fig:concrete}, we also include the result using the ILP-based heuristic proposed by \citet{morucci2018hypothesis}. For the number of pairs greater than 26, all three algorithms achieve P-values less than 0.05 and after 38 pairs, both the greedy algorithm and ILP-based heuristic did not find any pairs. As both the maximum and minimum P-values coincide in Figure \ref{fig:concrete}, we achieve an absolute robust test according to the definition \ref{defrobust_test}. Even though the ILP-based heuristic and QIP perform better in terms of solution quality, for a larger number of pairs, the greedy algorithm achieves similar quality solutions. All three algorithms reject the hypothesis of the zero treatment effect which supports the traditional knowledge of fly ash's positive effect on concrete strength \citep{concrete:data}. This result shows that the proposed greedy approach (GA) can achieve the same conclusion as the ILP-based heuristics and QIP, but in a significantly smaller amount of time.} 
%For the number of pairs greater than 26, both algorithms achieve P-values less than 0.05 and after 38 pairs, the greedy algorithm did not find any pairs. As both the maximum and minimum P-vales coincide in figure \ref{fig:concrete}, we achieve an absolute robust test according to the definition \ref{defrobust_test}. Both algorithms reject the hypothesis of zero treatment effect which supports the traditional knowledge of fly ash's positive effect on concrete strength \citep{concrete:data}. This result shows that the proposed greedy approach can achieve the same conclusion as the QIP, however, in a significantly lower amount of time.
\begin{table}[htbp]
  \centering
  \caption{Comparison of Greedy Algorithm (GA) and QIP solved with Gurobi with Concrete Compressive Strength dataset. QIP is solved with stopping criteria as 500 seconds time limit or 2\% optimality gap.}
 \adjustbox{max width=\textwidth}{

    \begin{tabular}{|c|c|c|c|c|c|c|c|c|}
    \hline
    \multicolumn{1}{|c|}{\multirow{2}[4]{*}{$n$}} & \multicolumn{4}{c|}{Maximum Z} & \multicolumn{4}{c|}{Minimum Z} \\
\cline{2-9}          & \multicolumn{1}{l|}{Z with GA} & \multicolumn{1}{l|}{Z with QIP} & \multicolumn{1}{l|}{Time GA} & \multicolumn{1}{l|}{Time QIP} & \multicolumn{1}{l|}{Z with GA} & \multicolumn{1}{l|}{Z with QIP} & \multicolumn{1}{l|}{Time GA} & \multicolumn{1}{l|}{Time QIP} \\
    \hline
    20    & 12.257 & 13.343 & 0.019 & 500  & 0.927 & 0.4346 & 0.028 & 500 \\
    \hline
    22    & 12.446 & 13.555 & 0.016 & 500  & 1.2246 & 0.626 & 0.019 & 500 \\
    \hline
    24    & 12.584 & 13.78 & 0.019 & 500  & 1.5594 & 1.1292 & 0.022 & 500 \\
    \hline
    26    & 12.645 & 15.272 & 0.021 & 500  & 1.993 & 1.6151 & 0.023 & 471 \\
    \hline
    28    & 12.564 & 15.195 & 0.021 & 500  & 2.424 & 2.0858 & 0.021 & 230 \\
    \hline
    30    & 12.129 & 15.232 & 0.029 & 500  & 2.8573 & 2.5886 & 0.030 & 130 \\
    \hline
    32    & 11.624 & 14.79 & 0.019 & 500  & 3.3077 & 3.0315 & 0.023 & 50 \\
    \hline
    34    & 11.171 & 14.187 & 0.016 & 500  & 3.7772 & 3.525 & 0.029 & 51 \\
    \hline
    36    & 9.5952 & 13.567 & 0.019 & 500  & 4.2498 & 4.0181 & 0.037 & 61 \\
    \hline
    38    & 7.8861 & 12.948 & 0.024 & 500  & 4.7101 & 4.4831 & 0.027 & 45 \\
    \hline
    \end{tabular}}%
  \label{tab:concrete}%
\end{table}
\begin{figure}[h!]
    \begin{center}
    \includegraphics[scale = 0.6]{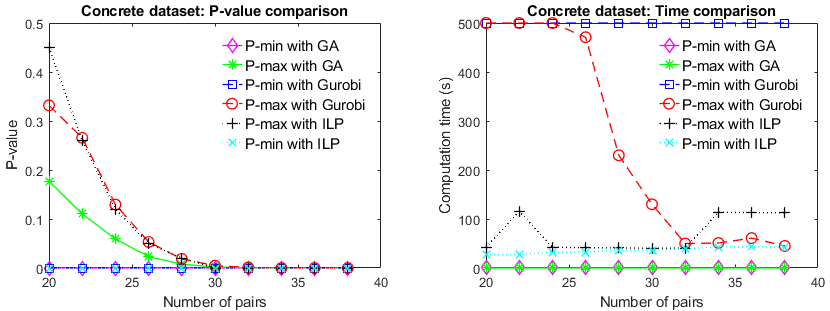}
    \caption{Comparison between Greedy Algorithm (GA), \textcolor{black}{ILP-based heuristic (ILP) proposed by \citet{morucci2018hypothesis}, and Gurobi solving the} QIP on P-values and computation time for the concrete dataset.}
    \label{fig:concrete}
   \end{center} 
\end{figure}

\subsection{Effect of misty weather on number of bike rentals}
In this experiment, we consider a slightly larger sized problem compared to the Concrete Compressive Strength dataset. We evaluate the effect of misty weather on the number of bike rentals. Our naive hypothesis is that there is no effect of mist on the rental bike usage. To test this hypothesis, we use the Bike-sharing dataset \citep{fanaee2014event} available in the UCI Machine learning repository. The dataset contains the daily count of bikes rented for 731 days, weather, and seasonal information of corresponding days between 2011 and 2012 in the capital bike-share system at Washington DC, USA. We consider 247 days as the treatment group\textcolor{black}{,} which had mist with a different combination of clouds. 463 days are considered as control\textcolor{black}{,} which consist\textcolor{black}{ed} of clear sky, few clouds, or partly cloudy days (without any mists). The seasonal information such as season, year, and workday were matched exactly. The weather variables such as temperature, wind speed, and humidity were matched if the differences were less or equal to 2, 6, and 6, respectively. If treated sample $i$ and control sample $j$ follows the above criteria, then $d_{i,j} = 1$, otherwise 0. This matching process produced a nonlinear optimization problem with 326 binary variables, more than double in size compared to the previous experiment. The robust test statistics achieved by the proposed algorithms and solving the QIPs of table \ref{tab:QIP} with Gurobi along with the computation time are provided in table \ref{tab:bike} and figure \ref{fig:bike}. We followed the QIP stopping criteria same as to the previous experiment.
\begin{table}[htbp]
  \centering
  \caption{Comparison of Greedy Algorithm (GA) and QIP solved with Gurobi with the bike-sharing dataset. QIP is solved with stopping criteria as 500 seconds time limit or a 2\% optimality gap. ``-'' implies no integer solution found within the time limit.}
  \adjustbox{max width=\textwidth}{
    \begin{tabular}{|c|c|c|c|c|c|c|c|c|}
    \hline
    \multicolumn{1}{|c|}{\multirow{2}[4]{*}{$n$}} & \multicolumn{4}{c|}{Maximum Z} & \multicolumn{4}{c|}{Minimum Z} \\
\cline{2-9}          & \multicolumn{1}{l|}{Z with GA} & \multicolumn{1}{l|}{Z with QIP} & \multicolumn{1}{l|}{Time GA} & \multicolumn{1}{l|}{Time QIP} & \multicolumn{1}{l|}{Z with GA} & \multicolumn{1}{l|}{Z with QIP} & \multicolumn{1}{l|}{Time GA} & \multicolumn{1}{l|}{Time QIP} \\
    \hline
    50    & 6.9736 & 6.7914 & 0.034 & 500   & -11.5334 & -10.1657 & 0.035 & 500 \\
    \hline
    55    & 6.0925 & 5.8509 & 0.032 & 500   & -11.2922 & -9.6567 & 0.034 & 500 \\
    \hline
    60    & 5.1916 & 4.8345 & 0.034 & 500   & -10.9022 & -9.1222 & 0.028 & 500 \\
    \hline
    65    & 4.2864 & 3.7268 & 0.036 & 500   & -10.5792 & -8.4047 & 0.036 & 500 \\
    \hline
    70    & 3.4461 & 2.6946 & 0.039 & 500   & -10.181 & -7.5596 & 0.036 & 500 \\
    \hline
    75    & 2.5065 & 1.0067 & 0.034 & 500   & -9.6883 & -6.2907 & 0.041 & 500 \\
    \hline
    80    & 1.4238 & 0.2036 & 0.043 & 500   & -9.1318 & -5.0902 & 0.036 & 500 \\
    \hline
    81    & 1.216 & -0.2893 & 0.039 & 500   & -9.0138 & -4.7659 & 0.036 & 500 \\
    \hline
    82    & 0.9255 & -0.5338 & 0.036 & 500   & -8.9001 & -4.4127 & 0.039 & 500 \\
    \hline
    83    & 0.6468 &    -   & 0.049 & 500   & -8.7803 &   -    & 0.050  & 500 \\
    \hline
    84    & 0.3713 &   -    & 0.039 & 500   & -8.6469 &  -     & 0.047 & 500 \\
    \hline
    85    & -1.2046 &   -    & 0.045 & 500   & -8.4748 &   -    & 0.036 & 500 \\
    \hline
    86    & -1.3929 &    -   & 0.037 & 500   & -8.2821 &  -     & 0.036 & 500 \\
    \hline
    87    & -1.6767 &   -    & 0.053 & 500   & -8.0448 &   -    & 0.046 & 500 \\
    \hline
    88    & -1.9539 &   -    & 0.043 & 500   & -7.7887 &   -    & 0.037 & 500 \\
    \hline
    \end{tabular}}%
  \label{tab:bike}%
\end{table}

From table \ref{tab:bike}, we can see that the greedy approach outperforms the QIP for all number of pairs. In fact, the QIP could not find an initial integer solution within the time limit for $n$ greater than 82. For $n$ between 50 to 82, the QIP solution improves marginally after 500 seconds. On the other hand, the greedy algorithm finds better solutions with a significantly lower amount of time for a number of matched pairs up to 88. 

\textcolor{black}{We also compare the results from the greedy algorithm with the ILP-based heuristic of \citet{morucci2018hypothesis} in Figure \ref{fig:bike}. For a fair comparison, we use the same matching algorithm and recommended heuristic settings as described in \citet{morucci2018hypothesis}. Figure \ref{fig:bike} shows that the greedy algorithm finds 88 pairs in the good set of matches and after 87 matched pairs, both maximum and minimum P-values are less than $\alpha = 0.05$. Therefore, we have an $\alpha-$robust test that fails to reject our hypothesis on the effect of mist on rental bike usage. ILP-based heuristic also achieves the $\alpha-$robust test; however, it finds 94 matched pairs in the dataset. As the ILP-based heuristic solves an integer linear program iteratively by bounding the sample standard deviation, it finds the maximum number of pairs. In contrast, the proposed algorithm assigns pair greedily so, it may select lesser number of pairs. Nonetheless, as we show in the proposition \ref{prop:3}, the greedy algorithm can achieve optimal solution and the maximum number of pairs if the matching is performed using exact matching methods. In terms of computation time, the greedy algorithm takes a fraction of a second wherein the ILP-based heuristic takes more than 9 seconds to solve an instance of the problem.     
}
%Figure \ref{fig:bike} shows that after 87 matched pairs with the greedy algorithm, both maximum and minimum P-vales are less than $\alpha = 0.05$. Therefore, we have an $\alpha-$robust test which fails to reject our hypothesis on the effect of mist on rental bike usage.
\begin{figure}[h!]
    \begin{center}
    \includegraphics[scale = 0.6]{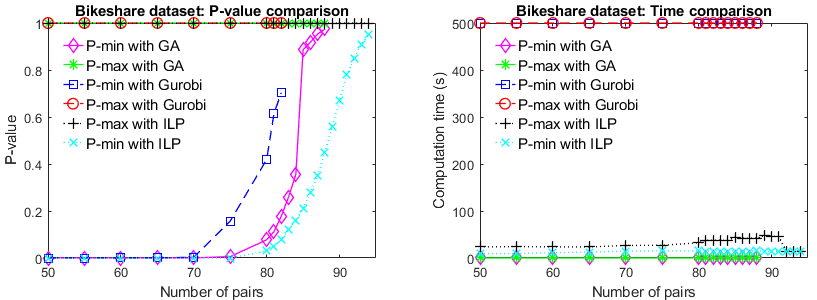}
    \caption{Comparison between Greedy Algorithm (GA), \textcolor{black}{ILP-based heuristic (ILP) proposed by \citet{morucci2018hypothesis}, and Gurobi solving the} QIP on P-values and computation time for the bike-sharing dataset.}
    \label{fig:bike}
   \end{center} 
\end{figure}

\subsection{Effect of product's location on price}
To show the scalability of the proposed greedy algorithm, we consider a very large-scale clickstream data of online shopping. We used the clickstream data from \citet{eshop} which contains information on clicks from online stores that are selling clothes for pregnant women. In this paper, we refer to the dataset as e-shop dataset. We hypothesize that if a store web page is split horizontally into two panels, top and bottom, the top part is as valuable as the bottom part and high price products are not always placed on the top panels. The dataset consists of 165,474 instances of clicks and each instance contains information on the product clicked on, time of the click, and origin of the IP address. The treatment group includes more than 35,000 click instances on the top-left panel products and the control group includes more than 27,000 clicks on products displayed on the bottom-left panel. We implemented exact matching on the pre-treatment covariates: month, order, country of IP address, product category, color, model photography, and page number within the website. For treated and control samples $i$ and $j$, respectively, $d_{i,j} = 1$ if two samples have exactly the same value on all the covariates. Product price is the outcome. Compared to the concrete strength and bike-sharing examples, e-shop data has a significantly large number of samples and produces a nonlinear optimization problem with more than 350,000 binary variables. The decision problem of this scale is almost impossible to solve with state-of-the-art commercial solvers. However, the proposed greedy algorithm can solve the problem in a reasonable amount of time considering the problem size.

Figure \ref{fig:e_shop} presents the maximum and minimum P-values achieved with the greedy algorithms for the robust Z-test and the time required for a different number of pairs. After 3,800 pairs, both the maximum and minimum P-values coincide at zero. So, we reject the hypothesis and conclude that the product location and price have a causal relation: high price products are placed on the top panel. From CPU time consumption in figure \ref{fig:e_shop}, we can see that the greedy algorithm takes significantly higher time at around 1300 seconds to solve the Z-test problems for e-shop data compared to the concrete strength and bike-sharing datasets. However, considering a nonlinear optimization problem with over 350,000 binary variables, the time consumption can be considered reasonable. \textcolor{black}{On the other hand, after running several hours, the ILP-based heuristic and QIP (solved with Gurobi) ran out of memory and could not find a solution. }
\begin{figure}[h!]
    \begin{center}
    \includegraphics[scale = 0.65]{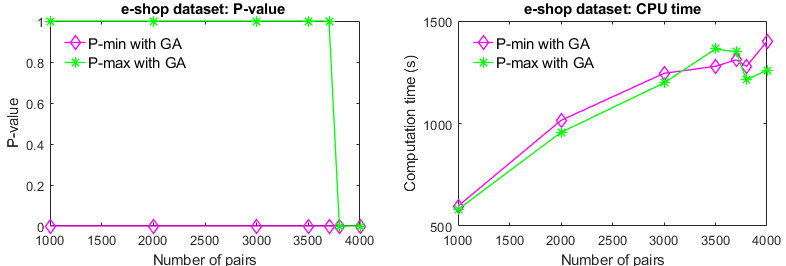}
    \caption{Comparison between Greedy Algorithm (GA) and \textcolor{black}{Gurobi solving the} QIP on P-values and computation time for the e-shop dataset.}
    \label{fig:e_shop}
   \end{center} 
\end{figure}

\textcolor{black}{Considering the results from three datasets, it is evident that both the ILP-based heuristic and solving the QIP with a commercial solver provide marginally better quality solutions, nonetheless, finds same inferential conclusion. 
%Moreover, both methods take significantly larger amount of time compared to the proposed greedy algorithms. 
For very small problems (i.e., problems similar to the Concrete Compresive Strength datasets), ILP-based heuristic can be a reasonable choice; however, in today's big data world, such small problems are highly unlikely in practice. In contrast, the greedy algorithm proposed in this paper provides same conclusion on the robust inference in a significantly smaller amount of time and are highly scalable for practical sized problems. 
}

\subsection{Inferential Implication}

\textcolor{black}{In general, the greedy algorithms proposed in this paper make local optimal choices. In this section, we investigate the inferential implication of such approximate solution choices by identifying how much deviation from the global optimal solution is allowable to preserve the actual inference of the robust hypothesis test. }

\textcolor{black}{
In Figure \ref{fig:implication}, we present three possible scenarios of P-value for the robust Z-test. P-value can be close to 0 or 1 (Figure \ref{fig:implication} (a) and (c), respectively) or somewhere in the middle (Figure \ref{fig:implication} (b)). Let's assume $Z_{max}^{greedy}$ is the solution of the greedy algorithm and $Z_{max}^{opt}$ is the global optimal solution of the maximization problem in any scenario. Now as the greedy algorithm will produce a sub-optimal solution then we can say $Z_{max}^{greedy} = Z_{max}^{opt} - gap$ where, $gap$ represents the optimality gap between two methods. Therefore, based on equations \eqref{p_max} and \eqref{p_min}, P-values with greedy algorithm and optimal solution will have the following relation: $P_{min}^{greedy} \geq P_{min}^{opt} $. Now, for a level of significance $\alpha$, the P-value with greedy algorithm ($P_{min}^{greedy}$) can deviate by the amount $\alpha$ without changing the conclusion on robustness. As the P-value is calculated from the standard normal distribution curve, we can establish a connection between $\alpha$ and $gap = Z_{max}^{opt} - Z_{max}^{greedy} $ using equation \eqref{p_max} and \eqref{p_min}: how much deviation on the $Z(\mathbf{a})$ we can allow without altering the inference on robustness. We can derive a similar relation for $P_{max}$ from the minimization problem following the same argument.
}
\begin{figure}[h!]
    \begin{center}
    \includegraphics[scale = 0.5]{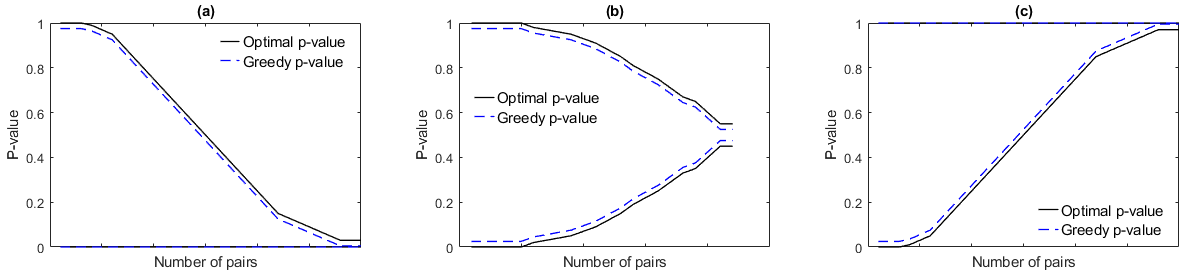}
    \caption{Different scenarios showing the gap between P-values obtained by an exact method and the greedy algorithm.}
    \label{fig:implication}
   \end{center} 
\end{figure}

\textcolor{black}{
Using $\alpha = 0.05$ as the rule of thumb and the probabilities from the standard normal distribution table, we can calculate the maximum allowable gap. For all the scenarios, if the optimal solutions of both maximization and minimization problems are very large (i.e., $Z \geq 4$ in Figure \ref{fig:implication} (a)) for maximum number of pairs ($n$) like the solutions of e-shop and Concrete Compressive Strength datasets, it would require more than 50\% optimality gap between the greedy and global optimal solutions to reverse the inference on robustness. This also applies to the situation when both problems have very small Z values (i.e., $Z \leq -4$ in Figure \ref{fig:implication} (c)) for maximum value of $n$ in the dataset as the area under the normal distribution curve is very small for $Z \geq |4|$. When $Z \leq |4|$, an approximate allowable $gap$ is $18\%$. In the case Concrete Compressive Strength study, we know the optimal solution for seven instances. For those seven instances, the average gap between the greedy and global optimal solution is around $11\%$ which is lower than the allowable gap. For the cases in Figure \ref{fig:implication} (a) and \ref{fig:implication} (c), we will have the same allowable gap due to the symmetry of the standard normal distribution curve. When both the maximum and minimum Z values deviate from optimality as shown in Figure \ref{fig:implication} (b), we can split the allowable gap into half for each of the problem. Despite the fact that the observed optimally gaps of the proposed greedy algorithms are well below the maximum allowable gap, our analysis shows that the proposed greedy algorithms will find an optimal solution (with zero gap) for restricted cases (i.e., when match pairs are constructed with exact matching). If all the variables in the dataset are categorical, we can apply an exact matching algorithm to find the good set of matches. In addition, we can take the same advantage with continuous data by converting them into categories by applying domain knowledge or any appropriate discretization algorithm.
}
%Despite the fact that the observed optimally gaps of the proposed greedy heuristics are well below the maximum allowable gap, proposition 3 showed that the proposed greedy algorithms will find an optimal solution (with zero gap) if all the variables in the dataset are categorical and we can apply an exact matching algorithm to find the good set of matches. We can take the same advantage with continuous data by converting them into categories. We can apply domain knowledge or any appropriate discretization algorithm to convert the continuous data into categorical data.

\subsection{Practical Guideline}
\textcolor{black}{
In the numerical experiments, we show the test-statistics for a wide variety of sample sizes ($n$). Our purpose here is to show the level of uncertainty in the inference; however, in practice the experimenter’s goal is to find the robust inference. Hence, we do not need to consider such wide range of $n$. Instead, we can pick a suitable $n$ and increase (decrease) it until the problem become infeasible (or feasible) due to the constraint \eqref{feas:fixed_pair}: $\sum_{i \in \mathscr{T}} \sum_{j \in \mathscr{C}} a_{ij} = n$. In the good set of matches, multiple treated units can be a good match to a single control unit (or the other way around). As we are considering one-to-one pair assignment, we may not be able to assign pairs for all the treated or control samples. Therefore, if we keep increasing $n$, after certain number of pairs, the optimization problem will be infeasible. Conversely, if we select a large $n$ and decrease it, at certain number of pairs, the problem will be feasible. Unfortunately, identifying that specific value of $n$ is not possible without solving a combinatorial optimization problem.
%In fact, we should aim for the largest value of $n$ in the dataset to increase the power of the hypothesis test. Unfortunately, the identifying the largest value of $n$ is not possible without solving a combinatorial optimization problem. 
In this case, an efficient approach can be starting with highest and lowest values of $n$ possible $(n_{max}, n_{min})$ and apply a binary search strategy until the problem becomes infeasible.
}

\textcolor{black}{We recommend starting the experiment with $n_{max}$ as the smallest number of treated or control samples in the good set of matches $D^{|\mathscr{T}| \times |\mathscr{C}|}$: $n_{max} = min(|\mathscr{T}|, |\mathscr{C}|)$ where $|\mathscr{T}|$ number of treated samples that are good matches for $|\mathscr{C}|$ number of control samples. The experimenter can set $n_{min}$ based on the number possible good matches in the dataset. If there are many non-zero entries available in $D^{|\mathscr{T}| \times |\mathscr{C}|}$, an experimenter can choose larger value of $n_{min}$ that can save significant computation time. 
%If there are a large number of matches available then the experimenter can set $n_{min}$ as $20\%$ of $n_{max}$, otherwise, we can use $n_{min}=1$. We can find this information his information by checking how many non-zero entries available in $D^{|\mathscr{T}| \times |\mathscr{C}|}$.
For instance, in the experiment with Concrete Compressive Strength Dataset, we can start with $n_{max} = 60$ as the matching algorithms provided a group of 68 treated samples that can be matched with 60 control samples. If we apply a binary search strategy considering $n_{min} = 20\%$ of $n_{max}$, we can find a robust test with a single run of the proposed algorithm. Using the exact strategy for the Bikeshare dataset, we can find a robust solution by checking for only two different values of $n$. }

\section{Conclusion}
In this study, we investigate the robust causal hypothesis test, robust Z-test, from observational data with continuous outcomes and develop a unique computational framework which includes \textcolor{black}{a} novel reformulation technique and efficient algorithms. The robust Z-test produces nonlinear integer optimization problems that are difficult to solve for very small datasets where in today's big data world, the causal hypothesis test problems are becoming larger and larger. We reformulated the nonlinear optimization models of robust Z-test into feasibility problems. By leveraging the structure of the reformulation, we developed greedy algorithmic schemes that are very efficient and scalable. \textcolor{black}{The feasibility reformulation also allows us to pose the robust test problems as quadratic integer programming problems and for smaller datasets, we can use any commercial MIP solvers to achieve exact solution.}
Moreover, the proposed unique reformulation scheme can be used to model general nonlinear and quadratic optimization problems (i.e., Quadratic Assignment Problem) as feasibility problems. Apart from the scalability, we show that the greedy approaches achieve the global optimal solution in many cases. The effectiveness of the proposed algorithms is demonstrated with three real-world case studies of varying sizes and comparing the result with equivalent QIPs solved with exact method \textcolor{black}{and the ILP-based heuristic proposed by \citet{morucci2018hypothesis}}. Numerical experiments on the case studies reveal that the proposed algorithms achieve the same inference as the exact method for the small test case\textcolor{black}{;} however, \textcolor{black}{it} takes significantly less computational time. On the other hand, for moderate to large instances, the proposed algorithms significantly outperform \textcolor{black}{both methods}. For moderate\textcolor{black}{ly} size problems, our algorithm produces a better solution in a fraction of a second while the exact method struggles to find any integer solution in hundreds of seconds. With the availability of observational data and increasing use of causal inference in decision-making, our algorithms can be very effective in harnessing the power of big data in the decision-making process. \textcolor{black}{A major limitation of the greedy algorithms is that they provide local optimal solutions; hence, there is a chance of altering the robustness inference. Nonetheless, it would take significantly large optimally gap to change the conclusion on robustness and alternative methods cannot solve large-scale problems.}

As a future extension, we plan to use the feasibility formulation of the Z-test to develop several potential algorithms. First, we plan to develop an iterative projection-based algorithm to solve the feasibility problems with quadratic constraints and binary variables. Then, we intend to incorporate algorithmic acceleration schemes to further improve the efficiency of the iterative algorithm and ensure scalability.

%\section{Acknowledgement}

%\section{Funding}
%This work is partially funded by Army Research Laboratory.

\bibliography{mybibfile}

\newpage
\appendix
\setcounter{page}{1}

%\section*{Appendix 1}

\section{Assumptions in Causal Inference}{\label{app:1}}
In this paper, we make the following assumptions which are commonly used in causal inference literature and are essential in estimating causal quantity from observational data \citep{rosenbaum1983central,stuart2010matching}.

\begin{assumption}
\label{assump:ignore}
\textup{(Conditional Independence)} $\exists$ a measured set of covariates $\mathbf{X} \in \mathcal{X}$ such that conditioning on $\mathbf{X}$, the potential outcomes and treatment status of samples are independent: $(Y_i^1, Y_i^0) \indep T_i | \mathbf{X}_i $, $\forall i \in \mathscr{S}$. 
\end{assumption}

\begin{assumption}
\label{assump:positivity}
\textup{(Positivity)} Given a set of measured covariate $\mathbf{X}_i$, each sample $i \in \mathscr{S}$ has a strictly positive probability of receiving treatment: $0< Pr(T_i = 1| \mathbf{X}) < 1,$ $\forall i \in \mathscr{S}$.
\end{assumption}

\begin{assumption}
\label{assump:sutva}
\textup{(SUTVA)} Treatment applied to sample $i$ does not effect the outcome of sample $j$: $Y_j^1 \indep T_i$ when $i \neq j$ and only one version of treatment exists: $Y_i^{(T_1,T_2,\cdots, T_N)} = Y_i^{T_i}$.
\end{assumption}

Assumption \ref{assump:ignore}, also known as \textit{strong ignorability} and \textit{selection on observables}, infers that the treatment assignment process on samples retained after controlling for $\mathbf{X}$ is `as good as random'. Hence, we can use the untreated samples as the counterfactual. Assumption \ref{assump:positivity} ensures that the treated and control group samples have overlap in their covariate distributions (i.e., common support) so that we are not extrapolating. Finally, assumption \ref{assump:sutva} implies that the samples do not interfere with each other's outcomes and every sample in the study receives treatment in the same way so that the way of receiving treatment does not confound its effect on the outcome. These assumptions ensure causal inference made from observational data with the matching method is valid and as unbiased as possible. 

\newpage

\section{Feasibility Formulation of the Maximization Problem} {\label{app:2}}
In this section, we discuss the feasibility reformulation for the maximization problem and the possible cases.

\subsection*{Cases for Maximization problem}

The quadratic constraint in the maximization problem is setup in the following way:
\begin{align}
    & \frac{1}{\sqrt{n}} \sum_{i\in \mathscr{T}}\sum_{j \in \mathscr{C}} (y^t_i -y^c_j)a_{i,j} \geq \gamma \times \sqrt{\frac{1}{n} \sum_{i\in \mathscr{T}} \sum_{j \in \mathscr{C}} [(y^t_i - y^c_j) a_{i,j}]^2 - (\frac{1}{n} \sum_{i\in \mathscr{T}}\sum_{j \in \mathscr{C}} (y^t_i - y^c_j)a_{i,j})^2 } \label{feas:max_root}\\
    & \frac{1}{n} (\sum_{i\in \mathscr{T}}\sum_{j \in \mathscr{C}} (y^t_i - y^c_j)a_{i,j})^2 \geq \gamma^2 \frac{1}{n} \sum_{i\in \mathscr{T}} \sum_{j \in \mathscr{C}} [(y^t_i - y^c_j) a_{i,j}]^2 - \gamma^2 (\frac{1}{n} \sum_{i\in \mathscr{T}}\sum_{j \in \mathscr{C}} (y^t_i - y^c_j)a_{i,j})^2 \label{feas:max}
\end{align}
We take the square of this inequality and consider additional constraints to validate the action. The resulting cases along with the constraints are presented below. The equivalent feasibility formulation of maximizing the Z-test model can be framed as the following binary-feasibility problem: $\exists \textrm{ a set of assignment } a_{i,j} \textrm{ so that } Z(\mathbf{a}) \geq \gamma$ with constraints \eqref{feas:quad_max}-\eqref{feas:bin_appen}.
\begin{align}
    & (1+\gamma^2 \frac{1}{n}) (\sum_{i\in \mathscr{T}}\sum_{j \in \mathscr{C}} (y^t_i - y^c_j)a_{i,j})^2 - \gamma^2 \sum_{i\in \mathscr{T}} \sum_{j \in \mathscr{C}} [(y^t_i - y^c_j) a_{i,j}]^2 \geq 0 \label{feas:quad_max}\\
   & \sum_{i \in \mathscr{T}} a_{ij} \leq 1 \quad \forall j \\
   &\sum_{j \in \mathscr{C}} a_{ij} \leq 1 \quad \forall i \\
   & \sum_{i \in \mathscr{T}} \sum_{j \in \mathscr{C}} a_{ij} = n \\
   & \textrm{Additional constraints to validate the inequality \eqref{feas:quad_max}} \\
   & a_{ij} \in \left \{ 0,1 \right \} \quad \forall i,j  \label{feas:bin_appen}
\end{align}

In the following, we discuss the possible constrints required to validate the square of the inequality in \eqref{feas:max}. Similar to the minimization cases, we consider $\hat{\sigma} > 0$, therefore, will not influence the cases.

\textbf{Case 1:} For this case, we consider $\gamma \geq 0$ and $\sum_{i\in \mathscr{T}}\sum_{j \in \mathscr{C}} (y^t_i - y^c_j)a_{i,j} \geq 0$. The inequality \eqref{feas:max_root} can be written as the following:
\begin{align}
    & \frac{1}{\sqrt{n}}\sum_{i\in \mathscr{T}}\sum_{j \in \mathscr{C}} (t^t_i - y^c_j)a_{i,j} \geq \gamma \hat{\sigma}
\end{align}
Here, $\gamma \geq 0, \hat{\sigma} >0$ and $\sum_{i\in \mathscr{T}}\sum_{j \in \mathscr{C}} (y^t_i - y^c_j)a_{i,j} \geq 0$. So, both sides of the inequality are non-negative. Taking the square will not flip the inequality. Hence, we will have the following constraints.
\begin{align}
    & (1+\gamma^2 \frac{1}{n}) (\sum_{i\in \mathscr{T}}\sum_{j \in \mathscr{C}} (y^t_i - y^c_j)a_{i,j})^2 - \gamma^2 \sum_{i\in \mathscr{T}} \sum_{j \in \mathscr{C}} [(y^t_i -y^c_j) a_{i,j}]^2 \geq 0 \label{max_case1_nonlin} \\
    & \sum_{i\in \mathscr{T}}\sum_{j \in \mathscr{C}} (y^t_i - y^c_j)a_{i,j} \geq 0 \label{max_case1_lin}
\end{align}

\textbf{Case 2:} For this case, we consider $\gamma \leq 0$ and $\sum_{i\in \mathscr{T}}\sum_{j \in \mathscr{C}} (y^t_i - y^c_j)a_{i,j} \leq 0$. The inequality \eqref{feas:max_root} can be written as the following:
\begin{align}
    & \frac{1}{\sqrt{n}}\sum_{i\in \mathscr{T}}\sum_{j \in \mathscr{C}} (y^t_i - y^c_j)a_{i,j} \geq \gamma \hat{\sigma}
\end{align}
Here, $\gamma \leq 0, \hat{\sigma} >0$ and $\sum_{i\in \mathscr{T}}\sum_{j \in \mathscr{C}} (y^t_i - y^c_j)a_{i,j} \leq 0$. As both sides of the inequality are non-positive, the sign of the quadratic constraint will flip and we will have the following constraints. 
\begin{align}
    & (1+\gamma^2 \frac{1}{n}) (\sum_{i\in \mathscr{T}}\sum_{j \in \mathscr{C}} (y^t_i - y^c_j)a_{i,j})^2 - \gamma^2 \sum_{i\in \mathscr{T}} \sum_{j \in \mathscr{C}} [(y^t_i - y^c_j) a_{i,j}]^2 \leq 0 \label{max_case2_nonlin} \\
    & \sum_{i\in \mathscr{T}}\sum_{j \in \mathscr{C}} (y^t_i - y^c_j)a_{i,j} \leq 0 \label{max_case2_lin}
\end{align}

\textbf{Case 3:} For this case, we consider $\gamma \geq 0$ and $\sum_{i\in \mathscr{T}}\sum_{j \in \mathscr{C}} (y^t_i - y^c_j)a_{i,j} \leq 0$. The inequality \eqref{feas:max_root} can be written as the following:
\begin{align}
    & \frac{1}{\sqrt{n}}\sum_{i\in \mathscr{T}}\sum_{j \in \mathscr{C}} (y^t_i - y^c_j)a_{i,j} \geq \gamma \hat{\sigma}
\end{align}
As the left side of the inequality is non-positive and right side is non-negative, the above equation only holds at equality: $\sum_{i\in \mathscr{T}}\sum_{j \in \mathscr{C}} (y^t_i - y^c_j)a_{i,j} = 0$. However, this is already considered in the other cases.

\textbf{Case 4:} For this case, we consider $\gamma \leq 0$ and $\sum_{i\in \mathscr{T}}\sum_{j \in \mathscr{C}} (y^t_i - y^c_j)a_{i,j} \geq 0$. The inequality \eqref{feas:max_root} can be written as the following: 
\begin{align}
    & \frac{1}{\sqrt{n}}\sum_{i\in \mathscr{T}}\sum_{j \in \mathscr{C}} (y^t_i - y^c_j)a_{i,j} \geq \gamma \hat{\sigma}
\end{align}
As $\sum_{i\in \mathscr{T}}\sum_{j \in \mathscr{C}} (y^t_i - y^c_j)a_{i,j} \geq 0$, left side of the inequality is non-negative but, $\gamma \leq 0$ and $\hat{\sigma} >0$ which makes the right side non-positive. Science, this is true for all $\gamma \in \mathbb{R}$ and $\gamma \leq 0$, then, we have,
\begin{align}
    & \frac{1}{\sqrt{n}}\sum_{i\in \mathscr{T}}\sum_{j \in \mathscr{C}} (y^t_i - y^c_j)a_{i,j} \geq \max_{\gamma \in \mathbb{R}} (\gamma \hat{\sigma}) = 0
\end{align}
Therefore, we will have the following constraints. For this case, our non-linear feasibility problem becomes a linear feasibility problem. 
\begin{align}
    & \sum_{i\in \mathscr{T}}\sum_{j \in \mathscr{C}} (y^t_i - y^c_j)a_{i,j} \geq 0
\end{align}

Please note that, the case 4 of the maximization problem is similar to the case 3 of the minimization problem. Therefore, to maintain consistency in the discussion, we will refer to the case 4 of the maximization as the case 3. All cases and resulting constraints are summarized in the following table.  
\begin{table}[htbp]
  \begin{center}
  \caption{Cases and resulting constraints for the maximization problem feasibility formulation of robust Z-test.}
  {\small
  \adjustbox{max width=\textwidth}{
    \begin{tabular}{|c|c|c|}
    \hline
       Case & Case constraints   & Quadratic Constraint \\
    \hline
      1 & $\gamma \geq 0$, $\sum_{i\in \mathscr{T}}\sum_{j \in \mathscr{C}} (y^t_i - y^c_j)a_{i,j} \geq 0$  &  $(1+\gamma^2 \frac{1}{n}) (\sum_{i\in \mathscr{T}}\sum_{j \in \mathscr{C}} (y^t_i - y^c_j)a_{i,j})^2 - \gamma^2 \sum_{i\in \mathscr{T}} \sum_{j \in \mathscr{C}} [(y^t_i - y^c_j) a_{i,j}]^2 \geq 0$ \\
    
    \hline
      2 & $\gamma \leq 0$, $\sum_{i\in \mathscr{T}}\sum_{j \in \mathscr{C}} (y^t_i - y^c_j)a_{i,j} \leq 0$   & $(1+\gamma^2 \frac{1}{n}) (\sum_{i\in \mathscr{T}}\sum_{j \in \mathscr{C}} (y^t_i - y^c_j)a_{i,j})^2 - \gamma^2 \sum_{i\in \mathscr{T}} \sum_{j \in \mathscr{C}} [(y^t_i - y^c_j) a_{i,j}]^2 \leq 0$ \\
    \hline
      3 & $\gamma \geq 0$, $\sum_{i\in \mathscr{T}}\sum_{j \in \mathscr{C}} (y^t_i - y^c_j)a_{i,j} \leq 0$   & \textcolor{black}{No quadratic constraint (Redundant, only true when both inequalities are zero)}  \\
    \hline
      4 & $\gamma \leq 0$, $\sum_{i\in \mathscr{T}}\sum_{j \in \mathscr{C}} (y^t_i - y^c_j)a_{i,j} \geq 0$   & \textcolor{black}{No quadratic constraint}  \\
    \hline
    \end{tabular}}%
  \label{tab:max_cases}%
    }
  \end{center}
\end{table}%

\newpage

\section{QIP Formulation}{\label{app:3}}
In this section, we discuss the Quadratic Integer Programming (QIP) formulation of feasibility problem.
\subsection*{QIP formulation for minimization problems}
Here, we develop the quadratic integer programs (QIPs) for the possible cases of the minimization problem by leveraging the structure of the feasibility formulation.

\textbf{Case 1:} $\gamma \geq 0 $ and $ \sum_{i\in \mathscr{T}} \sum_{j \in \mathscr{C}} [(y^t_i - y^c_j) a_{i,j}] \geq 0 $

Apart from the assignment constraints \eqref{z:angn1}-\eqref{z:bin}, we have to satisfy the constraints \eqref{min_case1_nonlin} and \eqref{min_case1_lin} to calculate $\gamma^*$ in the range $\gamma \geq 0 $. Simplifying the constraint \eqref{min_case1_nonlin} will result to the following:
\begin{align}
& ( \frac{n\gamma^2 }{n+\gamma^2}  )\sum_{i\in \mathscr{T}} \sum_{j \in \mathscr{C}} \left [ (y^t_i - y^c_j) a_{i,j} \right ]^2 \geq [ \sum_{i\in \mathscr{T}}\sum_{j \in \mathscr{C}} (y^t_i - y^c_j)a_{i,j} ]^2 \label{min_case1_nonlin_simple_appn}
\end{align}
As our objective is to find an assignment $a_{i,j}$ that produces the smallest value of $\gamma$ while satisfying the constraints \eqref{min_case1_nonlin_simple_appn} and \eqref{min_case1_lin}, we can exploit the structure of equation \eqref{min_case1_nonlin_simple_appn}. Note that in equation \eqref{min_case1_nonlin_simple_appn}, for any number of samples ($n$) minimum possible value of $\gamma$ is possible when $\sum_{i\in \mathscr{T}} \sum_{j \in \mathscr{C}} \left [ (y^t_i - y^c_j) a_{i,j} \right ]^2$ is maximum and $\left ( \sum_{i\in \mathscr{T}}\sum_{j \in \mathscr{C}} (y^t_i - y^c_j)a_{i,j} \right )^2$ is minimum. At the same time, the assignment should ensure $ \sum_{i\in \mathscr{T}} \sum_{j \in \mathscr{C}} [(y^t_i - y^c_j) a_{i,j}] \geq 0$. Therefore, we need to solve the following coupled problem simultaneously for a given sample size $n$:
\begin{align}
    & \argmax_{a_{i,j} \in \mathscr{M}} \sum_{i\in \mathscr{T}} \sum_{j \in \mathscr{C}} \left [ (y^t_i - y^c_j) a_{i,j} \right ]^2 \quad  \text{S.t.: } \sum_{i\in \mathscr{T}} \sum_{j \in \mathscr{C}} [(y^t_i - y^c_j) a_{i,j}] \geq 0 \text{ and constraints \eqref{z:angn1}-\eqref{z:bin}} \label{min_case1_coup1}\\
    & \argmin_{a_{i,j} \in \mathscr{M}} [ \sum_{i\in \mathscr{T}}\sum_{j \in \mathscr{C}} (y^t_i - y^c_j)a_{i,j} ]^2 \quad  \text{S.t.: } \sum_{i\in \mathscr{T}} \sum_{j \in \mathscr{C}} [(y^t_i - y^c_j) a_{i,j}] \geq 0 \text{ and constraints \eqref{z:angn1}-\eqref{z:bin}}\label{min_case1_coup2_appn}
\end{align}
We can use the following problem to solve the coupling problem \eqref{min_case1_coup1} and \eqref{min_case1_coup2_appn}:
\begin{align*}
    & \max_{a_{i,j} \in \mathscr{M}} \sum_{i\in \mathscr{T}} \sum_{j \in \mathscr{C}} \left [ (y^t_i - y^c_j) a_{i,j} \right ]^2 - [\sum_{i\in \mathscr{T}}\sum_{j \in \mathscr{C}} (y^t_i - y^c_j)a_{i,j} ]^2 \\
    & \text{subject to:} \sum_{i\in \mathscr{T}} \sum_{j \in \mathscr{C}} [(y^t_i - y^c_j) a_{i,j}] \geq 0 \text{ and constraints \eqref{z:angn1}-\eqref{z:bin}}
\end{align*}

\textbf{Case 2:} $\gamma \leq 0 $ and $ \sum_{i\in \mathscr{T}} \sum_{j \in \mathscr{C}} [(y^t_i - y^c_j) a_{i,j}] \leq 0 $

To get $\gamma^*$ in the range $\gamma \leq 0$, we have to find an assignment of pre-defined number of pairs ($n$) while satisfying the constraints \eqref{min_case2_nonlin} and \eqref{min_case2_lin} along with necessary assignment constraints \eqref{z:angn1}-\eqref{z:bin}. Simplifying the equation \eqref{min_case2_nonlin} will result the following:
\begin{align}
& [ \sum_{i\in \mathscr{T}}\sum_{j \in \mathscr{C}} (y^t_i - y^c_j)a_{i,j} ]^2 \geq (\frac{n\gamma^2 }{n+\gamma^2} )\sum_{i\in \mathscr{T}} \sum_{j \in \mathscr{C}} \left [ (y^t_i - y^c_j) a_{i,j} \right ]^2 \label{min_case2_nonlin_simple}
\end{align}
Using the same mechanism we used for case 1, we get the following coupled optimization problem.
\begin{align}
    & \argmax_{a_{i,j} \in \mathscr{M}} [ \sum_{i\in \mathscr{T}}\sum_{j \in \mathscr{C}} (y^t_i - y^c_j)a_{i,j} ]^2 \quad  \text{S.t.: } \sum_{i\in \mathscr{T}} \sum_{j \in \mathscr{C}} [(y^t_i - y^c_j) a_{i,j}] \leq 0 \text{ and constraints \eqref{z:angn1}-\eqref{z:bin}} \label{min_case2_coup11}\\
    & \argmin_{a_{i,j} \in \mathscr{M}} \sum_{i\in \mathscr{T}} \sum_{j \in \mathscr{C}} \left [ (y^t_i - y^c_j) a_{i,j} \right ]^2 \quad  \text{S.t.: } \sum_{i\in \mathscr{T}} \sum_{j \in \mathscr{C}} [(y^t_i - y^c_j) a_{i,j}] \leq 0 \text{ and constraints \eqref{z:angn1}-\eqref{z:bin}} \label{min_case2_coup22}
\end{align}
Now, for a predefined number of required assignments $n$, we can solve the following QIP to find optimal $\gamma$ for case 2.
\begin{align}
    & \argmax_{a_{i,j} \in \mathscr{M}} [ \sum_{i\in \mathscr{T}}\sum_{j \in \mathscr{C}} (y^t_i - y^c_j)a_{i,j} ]^2 - \sum_{i\in \mathscr{T}} \sum_{j \in \mathscr{C}} \left [ (y^t_i - y^c_j) a_{i,j} \right ]^2 \\
    & \text{subject to: } \sum_{i\in \mathscr{T}} \sum_{j \in \mathscr{C}} [(y^t_i - y^c_j) a_{i,j}] \leq 0 \text{ and constraints \eqref{z:angn1}-\eqref{z:bin}} 
\end{align}

\subsection*{QIP formulation for maximization problems}

\textbf{Case 1:} $\gamma \geq 0 $ and $ \sum_{i\in \mathscr{T}} \sum_{j \in \mathscr{C}} [(y^t_i - y^c_j) a_{i,j}] \geq 0 $

The objective of this case is to find the maximum value of $\gamma$ (i.e.,$\gamma^*$) in the range of $\gamma \geq 0$ for a pre-defined number of pairs while satisfying constraints \eqref{max_case1_nonlin} and \eqref{max_case1_lin} along with necessary assignment constraints. With further simplification, constraint \eqref{max_case1_nonlin} can be simplified as the following:
\begin{align}
& [ \sum_{i\in \mathscr{T}}\sum_{j \in \mathscr{C}} (y^t_i - y^c_j)a_{i,j} ]^2 \geq ( \frac{n\gamma^2 }{n+\gamma^2} )\sum_{i\in \mathscr{T}} \sum_{j \in \mathscr{C}} \left [ (y^t_i - y^c_j) a_{i,j} \right ]^2 \label{max_case1_nonlin_simple}
\end{align}
From simplified constraint \eqref{max_case1_nonlin_simple}, we can achieve $\gamma^*$ when $[\sum_{i\in \mathscr{T}}\sum_{j \in \mathscr{C}} (y^t_i - y^c_j)a_{i,j} ]^2$ is maximum and $\sum_{i\in \mathscr{T}} \sum_{j \in \mathscr{C}} [ (y^t_i - y^c_j) a_{i,j}  ]^2$ is minimum. So, we can solve the case 1 of the maximization problem by solving the following coupled problem simultaneously for a given sample size $n$:
\begin{align}
    & \argmax_{a_{i,j} \in \mathscr{M}} [ \sum_{i\in \mathscr{T}}\sum_{j \in \mathscr{C}} (y^t_i - y^c_j)a_{i,j} ]^2 \quad  \text{S.t.: } \sum_{i\in \mathscr{T}} \sum_{j \in \mathscr{C}} [(y^t_i - y^c_j) a_{i,j}] \geq 0 \text{ and constraints \eqref{z:angn1}-\eqref{z:bin}} \label{max_case1_coup1}\\
    & \argmin_{a_{i,j} \in \mathscr{M}} \sum_{i\in \mathscr{T}} \sum_{j \in \mathscr{C}} \left [ (y^t_i - y^c_j) a_{i,j} \right ]^2 \quad  \text{S.t.: } \sum_{i\in \mathscr{T}} \sum_{j \in \mathscr{C}} [(y^t_i - y^c_j) a_{i,j}] \geq 0 \text{ and constraints \eqref{z:angn1}-\eqref{z:bin}} \label{max_case1_coup2}
\end{align}
We can formulate the following QIP by combining the coupling problem \eqref{max_case1_coup1} and \eqref{max_case1_coup2}.
\begin{align}
    & \argmax_{a_{i,j} \in \mathscr{M}} [ \sum_{i\in \mathscr{T}}\sum_{j \in \mathscr{C}} (y^t_i - y^c_j)a_{i,j} ]^2 - \sum_{i\in \mathscr{T}} \sum_{j \in \mathscr{C}} \left [ (y^t_i - y^c_j) a_{i,j} \right ]^2 \\
    & \text{subject to: } \sum_{i\in \mathscr{T}} \sum_{j \in \mathscr{C}} [(y^t_i - y^c_j) a_{i,j}] \geq 0 \text{ and constraints \eqref{z:angn1}-\eqref{z:bin}} 
\end{align}

\textbf{Case 2:} $\gamma \leq 0 $ and $ \sum_{i\in \mathscr{T}} \sum_{j \in \mathscr{C}} [(y^t_i - y^c_j) a_{i,j}] \leq 0 $

To get $\gamma^*$ in the range $\gamma \leq 0$, we have to find an assignment of pre-defined number of pairs ($n$) while satisfying the constraints \eqref{max_case2_nonlin} and \eqref{max_case2_lin} along with necessary assignment constraints. Simplifying the equation \eqref{max_case2_nonlin} will result the following:
\begin{align}
& ( \frac{n\gamma^2 }{n+\gamma^2}  )\sum_{i\in \mathscr{T}} \sum_{j \in \mathscr{C}} \left [ (y^t_i - y^c_j) a_{i,j} \right ]^2 \geq [\sum_{i\in \mathscr{T}}\sum_{j \in \mathscr{C}} (y^t_i - y^c_j)a_{i,j} ]^2 \label{max_case2_nonlin_simple}
\end{align}
Using the same mechanism we used for case 1, we get the following coupled optimization problem.
\begin{align}
    & \argmax_{a_{i,j} \in \mathscr{M}} \sum_{i\in \mathscr{T}} \sum_{j \in \mathscr{C}} \left [ (y^t_i - y^c_j) a_{i,j} \right ]^2 \quad  \text{S.t.: } \sum_{i\in \mathscr{T}} \sum_{j \in \mathscr{C}} [(y^t_i - y^c_j) a_{i,j}] \leq 0 \text{ and constraints \eqref{z:angn1}-\eqref{z:bin}} \label{min_case2_coup1}\\
    & \argmin_{a_{i,j} \in \mathscr{M}} [ \sum_{i\in \mathscr{T}}\sum_{j \in \mathscr{C}} (y^t_i - y^c_j)a_{i,j} ]^2 \quad  \text{S.t.: } \sum_{i\in \mathscr{T}} \sum_{j \in \mathscr{C}} [(y^t_i - y^c_j) a_{i,j}] \leq 0 \text{ and constraints \eqref{z:angn1}-\eqref{z:bin}} \label{min_case2_coup2}
\end{align}
The problems in \eqref{min_case2_coup1} and \eqref{min_case2_coup2} can be combined in the following problem:
\begin{align}
    & \argmax_{a_{i,j} \in \mathscr{M}} \sum_{i\in \mathscr{T}} \sum_{j \in \mathscr{C}} [ (y^t_i - y^c_j) a_{i,j}  ]^2 - [ \sum_{i\in \mathscr{T}}\sum_{j \in \mathscr{C}} (y^t_i - y^c_j)a_{i,j} ]^2  \\
    & \text{subject to:} \sum_{i\in \mathscr{T}} \sum_{j \in \mathscr{C}} [(y^t_i - y^c_j) a_{i,j}] \leq 0  \text{ and constraints \eqref{z:angn1}-\eqref{z:bin}}
\end{align}

\end{document}